\definecolor{mygreen}{RGB}{28,172,0} 
\definecolor{mylilas}{RGB}{170,55,241}
\DeclareMathOperator{\arctantwo}{arctan2}
\newcommand{\norm}[1]{\left\lVert#1\right\rVert}
\newtheorem{theorem}{Theorem}
\newtheorem{remark}{Remark}
\providecommand{\keywords}[1]{\textit{Keywords and phrases: } #1}
\begin{document}
	
\title{A micro-macro Markov chain Monte Carlo method for molecular dynamics using reaction coordinate proposals I: direct reconstruction}

\author[1]{Hannes Vandecasteele}
\author[1]{Giovanni Samaey}
\affil[1]{KU Leuven, Department of Computer Science, NUMA Section, Celestijnenlaan 200A box 2402, 3001 Leuven, Belgium}
\date{\today}
	
\maketitle

\begin{abstract}
We introduce a new micro-macro Markov chain Monte Carlo method (mM-MCMC) to sample invariant distributions of molecular dynamics systems that exhibit a time-scale separation between the microscopic (fast) dynamics, and the macroscopic (slow) dynamics of some low-dimensional set of reaction coordinates. The algorithm enhances exploration of the state space in the presence of metastability by allowing larger proposal moves at the macroscopic level, on which a conditional accept-reject procedure is applied. Only when the macroscopic proposal is accepted, the full microscopic state is reconstructed from the newly sampled reaction coordinate value and is subjected to a second accept/reject procedure. The computational gain stems from the fact that most proposals are rejected at the macroscopic level, at low computational cost, while microscopic states, once reconstructed, are almost always accepted. We analytically show convergence and discuss the rate of convergence of the proposed algorithm, and numerically illustrate its efficiency on a number of standard molecular test cases. We also investigate the effect of the choice of different numerical parameters on the efficiency of the resulting mM-MCMC method.
\end{abstract}

\keywords{Markov chain Monte Carlo, micro-macro acceleration, molecular dynamics, multi-scale modelling, coarse-graining, Langevin dynamics, reaction coordinates}

\section{Introduction} \label{sec:introduction}
Countless systems in chemistry and physics consist of a large number of microscopic particles, of which all positions are collected in the system state $x\in\mathbb{R}^d$, with $d$ the (high) dimension of the system~\cite{leimkuhler2016molecular}. The dynamics of such systems is usually governed by a potential energy $V(x)$ and Brownian motion $W_t$, for instance through the overdamped Langevin dynamics
\begin{equation}\label{eq:overdamped_langevin}
	dX_t = -\nabla V(X_t)dt+\sqrt{2\beta^{-1}}dW_t,
\end{equation} 
in which $X_t$ represents the time-dependent positions of an individual realisation of the dynamics, and $\beta$ is the inverse temperature. Examples of these systems include macromolecules such as polymers~\cite{jourdain2003mathematical}, fluids and solids~\cite{steinhauser2017computational}, and tumour growth~\cite{rejniak2011hybrid}. 

In molecular dynamics, one often wants to sample the time-invariant distribution of such a system, which is the Gibbs-measure
\begin{equation} \label{eq:mu}
d\mu(x) = Z_V^{-1} \exp\left(-\beta V(x)\right)  dx,
\end{equation}
with $Z_V$ the normalization constant and $dx$ the Lebesgue measure. 
Sampling Gibbs distributions is usually achieved via Markov-chain Monte Carlo (MCMC) methods, in which proposal moves, for instance based on the dynamics~\eqref{eq:overdamped_langevin}, are supplemented with an accept-reject criterion. MCMC methods were introduced in by Metropolis in 1953~\cite{metropolis1953equation} 
and later generalised by Hastings 
\cite{hastings1970monte}.

Standard MCMC methods can face several computational problems. First, there often exists a large time-scale separation between the fast dynamics of the full, high-dimensional (microscopic) system and the slow behaviour of some suitable low-dimensional (macroscopic) degrees of freedom. Then, for stability reasons, simulating the microscopic dynamics~\eqref{eq:overdamped_langevin} requires taking time steps on the order of the fastest mode of the system, limiting the size of proposal moves and slowing down exploration of the full state space. In particular, when the potential $V$ contains multiple local minima, standard MCMC methods can remain stuck for a long time in these minima. This phenomenon is called metastability. 
There exist several techniques to accelerate sampling in such a context, for instance the parallel replica dynamics~\cite{voter1998parallel,voter2002extending,le2012mathematical,lelievre2016partial}, the adaptive multilevel splitting method~\cite{cerou2007adaptive} and kinetic Monte Carlo~\cite{voter2007introduction}. Second, for high-dimensional problems, simply generating an MCMC proposal may already require a considerable computational effort. 
When the acceptance rate is low, a lot of this computational effort is wasted on proposals that will afterwards be rejected. 
To increase the acceptance rate for high-dimensional problems, one can use modified Gaussian proposals~\cite{durmus2017fast,cotter2013mcmc,abdulle2019accelerated} or add additional (biasing) terms to the potential~\cite{wang2001efficient,henin2004overcoming,darve2001calculating, stoltz2010free}.

For molecular dynamics simulations, quite some effort has been done in obtaining coarse-grained descriptions of the system in terms of a small number of slow degrees of freedom that capture some essential macroscopic features of the system. An important technique is the kinetic Monte Carlo method~\cite{voter2007introduction}, where the macroscopic variables are basins of attraction around local minima of the potential energy. Another example is the adaptive resolution technique~\cite{praprotnik2005adaptive} which models certain regions with the most dynamics with the accurate microscopic model, while the other regions can be simulated accurately with a macroscopic models. Other coarse-graining techniques for molecular dynamics consist of obtaining a macroscopic Brownian dynamics of the microscopic system~\cite{erban2014molecular}, or by simply averaging out all the fast microscopic degrees of freedom to obtain an approximate macroscopic dynamics~\cite{hartmann2007model,pavliotis2008multiscale}. In this manuscript, we use a coarse-graining technique based on \emph{reaction coordinates}. A reaction coordinate is a smooth function from the high-dimensional configuration space $\mathbb{R}^d$ to a lower dimensional space $\mathbb{R}^n$ with $n \ll d$~\cite{stoltz2010free, legoll2010effective}. We denote this function as
\begin{equation}
\xi: \mathbb{R}^d \to \mathbb{R}^n, \ x \mapsto \xi(x) = z.
\end{equation} 
Based on the underlying evolution of the molecular system, one can approximate the dynamics of the reaction coordinate values by an \emph{effective dynamics}~\cite{legoll2010effective} of the form
\begin{equation}\label{eq:effdyn_intro}
dZ_t = b(Z_t) d + \sqrt{2\beta^{-1}} \sigma(Z_t) dW_t.
\end{equation}
We discuss the derivation of a suitable effective dynamics for our setting in Section~\ref{sec:effdyn}.

In this manuscript, we propose a new MCMC method, called micro-macro MCMC (mM-MCMC), that aims at exploiting approximate coarse-grained descriptions of the type~\eqref{eq:effdyn_intro} to accelerate sampling of the invariant measure of high-dimensional stochastic processes~\eqref{eq:overdamped_langevin} in presence of a time-scale separation. The objective of the method is to obtain a significantly lower variance on the reaction coordinates than the standard MCMC method, for a given run-time. The mM-MCMC scheme consists of three steps to generate a new sample from the Gibbs measure~\eqref{eq:mu}: (i) \textit{restriction}, i.e., computation of the reaction coordinate value $z$ associated to the current microscopic sample $x$; (ii) a \textit{Macroscopic MCMC step}, i.e., sampling a new value of the reaction coordinate based on the effective dynamics~\eqref{eq:effdyn_intro}; (iii) \emph{reconstruction}, i.e., creation of a microscopic sample based on the sampled reaction coordinate value. Step (ii) contains an accept/reject step at the reaction coordinate level. If the proposal is rejected, we propose a new reaction coordinate value and we only proceed when the macroscopic proposal is accepted. After reconstruction, we perform an additional accept/reject procedure to ensure the exact target distribution~\eqref{eq:mu} is sampled consistently. The mM-MCMC algorithm is discussed in detail in Section~\ref{sec:mM-MCMC}. In Section~\ref{sec:convergence}, we show analytically that the proposed mM-MCMC method samples the correct invariant measure, regardless of the effective dynamics that was used to generate the proposals at the reaction coordinate level. 

The advantage of using an effective dynamics to generate macroscopic proposals for the reaction coordinates crucially depends on the quality of the proposal moves at the reaction coordinate level, for two reasons. First, one needs to ensure that the fastest modes are not present at the reaction coordinate level, such that larger moves are possible than at the microscopic level, enhancing the exploration of the phase space. Second, the scheme should be constructed such that most rejected proposals are already rejected at the reaction coordinate level, i.e., without ever having to perform the (costly) reconstruction of the corresponding microscopic sample. In particular, the acceptance rate of the reconstructed microscopic samples should be close to $1$. If the effective dynamics used to generate reaction coordinate proposals is not an accurate approximation of the exact time-dependent evolution of the reaction coordinate values, more proposals will only be rejected after reconstruction at the microscopic level, which leads to waste of computational efforts. We present numerical results in Section~\ref{sec:results}, in which we also study the effect of the choice of some components in the mM-MCMC method, on the computational efficiency.

The idea of using an effective dynamics to generate coarse-grained proposals was already proposed in the Coupled Coarse Graining MCMC method, introduced in~\cite{kalligiannaki2012coupled,kalligiannaki2012multilevel}, where large lattice systems with an Ising-type potential energy were sampled. In this setting, there are natural expressions for the reconstruction step. In our work, we significantly extend the applicability of such an approach by introducing the use of reaction coordinates to generate microscopic samples that correspond to a given value of the reaction coordinate. 
Similarly, a two-level MCMC algorithm is also used in~\cite{efendiev2006preconditioning} as a `pre-conditioner' to increase the microscopic acceptance rate for fluid flows. Also here, reconstruction is performed in a particular setting, in casu for fluid flows.
Also in other contexts, multilevel MCMC approaches have already been proposed for specific applications. For data assimilation, a multilevel MCMC method was used to sample the posterior distribution in Bayesian inference~\cite{marzouk2013bayesian}. Here, the different levels correspond to a different resolution of a forward PDE evaluation, not to different levels of modelling.  

The remainder of this manuscript is organised as follows. In Section~\ref{sec:effdyn}, we briefly introduce the time-invariant distribution, free energy and effective dynamics of a reaction coordinate. Section~\ref{sec:mM-MCMC} introduces the mM-MCMC method, explaining each algorithmic step in detail. 
In Section~\ref{sec:convergence}, we state and prove the convergence result of mM-MCMC method, along with a result that relates the rate of convergence to equilibrium of mM-MCMC to the rate of convergence of the macroscopic MCMC method. In Section~\ref{sec:results}, we apply the mM-MCMC scheme to two molecular dynamics cases: an academic three-atom molecule and butane. In each example, there is a time-scale present between parts of the molecule and we show numerically that mM-MCMC is able to bridge a large part of the time-scale separation. Here, we also illustrate the impact of different choices for the effective dynamics, the approximate macroscopic invariant distribution and the type of reconstruction on the efficiency of the mM-MCMC method over the microscopic MALA method. We discuss in detail the impact of each combination of the above parameters on the macroscopic and microscopic acceptance rate, the runtime and the variance of an estimated quantity of interest. We conclude this manuscript with a summarising discussion and some pointers to future research in Section~\ref{sec:conclusion}. In particular, when the reaction coordinate function has a complicated form, sampling a microscopic sample on the sub-manifold of constant reaction coordinate during for the reconstruction step can be expensive and cumbersome. Therefore, we will introduce an \emph{indirect reconstruction} scheme for the mM-MCMC method in a companion paper~\cite{vandecasteele2020indirect}, to make the reconstruction step more general and efficient. Correspondingly, we will refer to the method in this paper as the mM-MCMC with direct reconstruction.

\section{Reaction coordinates and effective dynamics} \label{sec:effdyn}

In this section, we first introduce the concept of reaction coordinates and give their time-invariant distribution based on the concept of free energy (Section~\ref{subsec:cg}). We then describe the effective dynamics~\cite{legoll2010effective} to obtain an approximate dynamics at the reaction coordinate level (Section~\ref{subsec:effdyn}). Finally, in Section~\ref{subsec:reconstr}, we give a time-invariant \emph{reconstruction distribution} of microscopic samples, given a fixed value of the reaction coordinate, which will be important in the following sections.

\subsection{Coarse-grained descriptions and reaction coordinates} \label{subsec:cg}
A reaction coordinate is a continuous function $\xi$ from the high-dimensional configuration space $\mathbb{R}^d$ to a lower dimensional space $\mathbb{R}^n$ of partial information with $n \ll d$~\cite{stoltz2010free} :
\begin{equation} \label{eq:rc}
\xi : \mathbb{R}^d \to \mathbb{R}^n.
\end{equation}
We denote by $H \subset \mathbb{R}^n$ the image of $\xi$ and by $\Sigma(z)$ the level set of $\xi$ at constant value $z$. Throughout the text, we will use the letter $z$ to denote a value of a reaction coordinate. A useful formula that relates integrals over the set $\mathbb{R}^d$ to integrals over the level sets of the reaction coordinate $\xi$ is the co-area formula~\cite{stoltz2010free}. For any smooth function $f: \mathbb{R}^d \to \mathbb{R}$ we can write
\begin{equation} \label{eq:coarea}
\int_{\mathbb{R}^d} f(x) dx = \int_{H} \int_{\Sigma(z)} f(x) \left(\det G(x)\right)^{-1/2} d\sigma_{\Sigma(z)}(x) \ dz =
\int_{H} \int_{\Sigma(z)} f(x) \ \delta_{\xi(x)-z}(dx) dz, 
\end{equation}
where $d\sigma_{\Sigma(z)}$ is the Lebesgue measure on $\Sigma(z)$, induced by the Lebesgue measure on the ambient space $\mathbb{R}^d$, the Gram matrix $G(x)$ is defined as
\[
G(x) = \nabla \xi(x) ^T \nabla \xi(x),
\]
and $\delta_{\xi(x)-z}(dx) = \left(\det G(x)\right)^{-1/2} d\sigma_{\Sigma(z)}(x)$.

Given the invariant measure $\mu(x)$ for the full microscopic system, we can define the marginal invariant distribution with respect to the reaction coordinates as~\cite{stoltz2010free}
\begin{equation} \label{eq:invcoarse}
\mu_0(z) \propto \int_{\Sigma(z)} \mu(x) \ \delta_{\xi(x)-z}(dx) = \int_{\Sigma(z)} \mu(x) \ \left(\det G(x)\right)^{-1/2} d\sigma_{\Sigma(z)}(x).
\end{equation}
We can associate the marginal distribution $\mu_0$ to the free energy of the reaction coordinate of the system. Using the co-area formula~\eqref{eq:coarea}, we can define the free energy, or the potential energy of the reaction coordinate, by integrating the Gibbs measure on the level sets of $\xi$:
\begin{equation} \label{eq:freeenergy}
A(z) = -\frac{1}{\beta} \ln \left( \int_{\Sigma(z)} Z_V^{-1} \exp(-\beta V(x))  \left(\det G(x)\right)^{-1/2} d \sigma_{\Sigma(z)}(x)\right).
\end{equation}
Putting the definitions of the invariant distribution of the reaction coordinates~\eqref{eq:invcoarse}, the co-area formula~\eqref{eq:coarea} and the free energy~\eqref{eq:freeenergy} together, we can given an alternative expression for $\mu_0(z)$ as
\begin{equation} \label{eq:invariantrc}
\mu_0(z) = Z_A^{-1} \exp(-\beta A(z)),
\end{equation}
where $Z_A$ is the normalization constant. The time-invariant distribution of the reaction coordinate values hence has the same form as the Gibbs measure~\eqref{eq:mu} where the free energy $A(z)$ takes over the role of the potential energy $V(x)$.

\subsection{Effective dynamics: evolution of the reaction coordinates} \label{subsec:effdyn}
As for the overdamped Langevin equation~\eqref{eq:overdamped_langevin} that models the evolution of the microscopic samples in the configuration space, one can also define an evolution equation for the reaction coordinate values~\cite{legoll2010effective}. The exact time-evolution of the reaction coordinate values is given by the stochastic differential equation (SDE)
\begin{equation} \label{eq:exactrcdynamics}
d\tilde{Z}_t = \tilde{b}(\tilde{Z}_t, t) dt + \sqrt{2\beta^{-1}} \tilde{\sigma}(\tilde{Z}_t, t) dW_t,
\end{equation}
where the drift and diffusion coefficients $\tilde{b}$ and $\tilde{\sigma}$ read
\[
\begin{aligned}
\tilde{b}(z, t) &= \mathbb{E}_{\psi(t)}[ \  -\nabla V(X_t) \cdot \nabla \xi(X_t) + \beta^{-1} \triangle V(X_t) \ | \xi(X_t) = z ] \\
\tilde{\sigma}^2(z, t) &= \mathbb{E}_{\psi(t)}[ \  \norm{\nabla \xi(X_t) }^2 \ | \ \xi(X_t) = z ],
\end{aligned}
\]
with $\psi(t)$ the distribution of $X_t$~\eqref{eq:overdamped_langevin}. 

Equation~\eqref{eq:exactrcdynamics} is not closed, since the coefficients $\tilde{b}(z, t)$ and $\tilde{\sigma}^2(z, t)$ are based on the time-dependent distribution $\psi(t)$. One therefore usually considers an approximate equation that 
has the same marginal time-invariant distribution for the reaction coordinates as the exact dynamics~\cite{legoll2010effective}. This dynamics is called the \emph{effective dynamics}, and is defined by 
\begin{equation} \label{eq:effdyn}
dZ_t = b(Z_t) dt + \sqrt{2 \beta^{-1}} \sigma(Z_t) dW_t,
\end{equation}
where the drift and diffusion terms are computed using the Gibbs measure $\mu$,
\begin{equation} \label{eq:effdyncoef}
\begin{aligned}
b(z) &= \mathbb{E}_{\mu}[ \  -\nabla V \cdot \nabla \xi + \beta^{-1} \triangle V \ | \ \xi(X)=z] \\
\sigma^2(z) &= \mathbb{E}_{\mu}[ \ \norm{\nabla \xi }^2 \ | \ \xi(X)=z].
\end{aligned}
\end{equation}
In practice, the coefficients $b(z)$ and $\sigma^2(z)$ need only be pre-computed once on a grid of $z-$values, and we use linear interpolation to compute the coefficients in an in-between reaction coordinate value.

A few numerical schemes have been proposed to compute the free energy and the coefficients $b$ and $\sigma$ in the effective dynamics for a given value of $z$. We mention here a projection dynamics~\cite{stoltz2010free} and a hybrid Monte Carlo method~\cite{lelievre2018hybrid}. As we will explain in the next section, the mM-MCMC scheme allows working with an approximation to the invariant distribution of the reaction coordinate values that can contain significant discretization errors. 

\subsection{Reconstructing microscopic samples from a reaction coordinate value\label{subsec:reconstr}}
To reconstruct a microscopic sample from a reaction coordinate value $z$, we define a reconstruction distribution on the level set $\Sigma(z)$ of microscopic samples $x$ with $\xi(x)=z$.  Consider a probability distribution $\psi(x) dx$ on the microscopic state space $\mathbb{R}^d$. By the co-area formula~\eqref{eq:coarea}, the corresponding probability distribution of $x$ defined on the sub-manifold $\Sigma(z)$ of a constant value of $\xi(x)=z$ reads
\begin{equation*}
d\nu_\psi(x|z) = \frac{\psi(x) \left(\det G(x)\right)^{-1/2} d\sigma_{\Sigma(z)}(x)}{\int_{\Sigma(z)}  \psi(x) \left(\det G(x)\right)^{-1/2} d\sigma_{\Sigma(z)}(x)}.
\end{equation*}
In particular, when $\psi(x)dx$ is the Gibbs measure $\mu(x)dx$, we can define the \emph{exact} time-invariant reconstruction distribution for a reaction coordinate as
\begin{equation} \label{eq:conditionalxgivenz}
d \nu(x|z) = \frac{\exp(-\beta V(x)) \left(\det G(x)\right)^{-1/2} d\sigma_{\Sigma(z)}(x)}{\int_{\Sigma(z)} \exp(-\beta V(x)) \left(\det G(x)\right)^{-1/2}  d\sigma_{\Sigma(z)}(x)}.
\end{equation}

Note that the expression~\eqref{eq:freeenergy} for the free energy $A(z)$ is related to the definition of the reconstruction distribution~\eqref{eq:conditionalxgivenz}, which is the invariant distribution of $x$, conditioned on a value of the reaction coordinate $z$~\eqref{eq:conditionalxgivenz}. In fact, one can give an alternative expression for $\nu$ using the rules of conditional probability
\begin{equation} \label{eq:exactreconstruction2}
d \nu(x|z) = \frac{\mu(x)}{\mu_0(z)} \delta_{\xi(x)-z}(dx)= \frac{Z_A}{Z_V} \frac{\exp(-\beta \ V(x))}{\exp(-\beta \ A(z))} \delta_{\xi(x)-z}(dx),
\end{equation}
which is identical to~\eqref{eq:conditionalxgivenz} by the co-area formula. For a microscopic sample $x$ with reaction coordinate value $z$, we can thus relate the time-invariant densities as $\mu(x) = \nu(x|z) \mu_0(z)$ on $\mathbb{R}^d$.

\section{Micro-macro Markov chain Monte Carlo method} \label{sec:mM-MCMC}
All concepts are now in place to state the general micro-macro Markov chain Monte Carlo (mM-MCMC) algorithm. We assume that there is a `natural' reaction coordinate available in the molecular system, such as an angle or bond length. We present the complete mM-MCMC method with direct reconstruction in this section, and discuss its convergence and rate of convergence properties in Section~\ref{sec:convergence}.

The aim of the mM-MCMC method is to generate a sample from the microscopic Gibbs measure $\mu(x) dx$ on the high-dimensional space $\mathbb{R}^d$. The method relies on the availability of two ingredients. First, we assume that we can sample an approximation $\bar{\mu}_0(z)$ to the exact invariant probability measure $\mu_0$ of the reaction coordinates, using an MCMC method with a macroscopic transition distribution $q_0(\cdot | \cdot)$. This macroscopic sampling is discussed in Section~\ref{subsubsec:coarseprop}.  Second, we require a reconstruction distribution $\bar{\nu}(x|z)$ of microscopic samples conditioned upon a given reaction coordinate value. In principle, the choice of $\bar{\mu}_0$ and $\bar{\nu}$ is arbitrary for the mM-MCMC method to converge. However, these choices influence the efficiency of the resulting method. The reconstruction step is discussed in Section~\ref{subsubsec:reconstruction}. Both steps involve an accept/reject procedure. The complete algorithm is shown in Algorithm~\ref{algo:mM-MCMC}.

\subsection{Generating a macroscopic proposal} \label{subsubsec:coarseprop}
Suppose we start with a microscopic sample $x_n$ that constitutes a sample of $\mu$. To generate a macroscopic proposal according to the approximate distribution $\bar{\mu}_0$, we first restrict the current microscopic sample to its reaction coordinate value, i.e., we compute $z_n=\xi(x_n)$. Next, we propose a new reaction coordinate value $z'$ using the macroscopic transition probability $q_0(z' | z_n)$. This transition kernel can, for instance, be based on the effective dynamics~\eqref{eq:effdyn}, a gradient descent method based on an approximation of the free energy~\eqref{eq:freeenergy}, or even a simple Brownian motion. To ensure that $z'$ samples the prescribed distribution $\bar{\mu}_0$ of the reaction coordinate values, we accept $z'$ with probability
\begin{equation} \label{eq:coarseaccept}
\alpha_{CG}(z' | z_n) = \min\left\{1, \frac{\bar{\mu}_0(z') \ q_0(z_n | z')}{\bar{\mu}_0(z_n) \ q_0(z' | z_n)} \right\},
\end{equation}
which is the standard Metropolis-Hastings form for the acceptance rate.
Therefore, we can define the macroscopic transition kernel $\mathcal{D}$ as
\begin{equation} \label{eq:coarsetransitionkernel}
\mathcal{D}(z' | z_n) = \alpha_{CG}(z' | z_n) \ q_0(z' | z_n) + \left(1 - \int_{H} \alpha_{CG}(y|z_n) \ q_0(y|z_n) \ dy \right) \delta(z'-z_n).
\end{equation}
Note that $\bar{\mu}_0$ is indeed the stationary probability measure associated with $\mathcal{D}$. If $z'$ is accepted, we proceed to the reconstruction step. If not, we return to the first step and define $x_{n+1} = x_n$.

\subsection{Reconstructing a microscopic sample} \label{subsubsec:reconstruction}
If the reaction coordinate value $z'$ has been accepted, we construct a microscopic sample $x'$ by taking one sample from the given reconstruction distribution $\bar{\nu}(\cdot | z')$. Afterwards, we decide on the acceptance of $x'$ in a final accept/reject step. 

To compute the corresponding microscopic acceptance probability, we first define the transition probability distribution on the microscopic level. Starting from the previous microscopic sample $x_n$, the microscopic transition distribution reads
\begin{equation} \label{eq:coarsetransitionprobability}
q(x' | x_n) = \bar{\nu}(x' |  \xi(x')) \ \mathcal{D}(\xi(x') |  \xi(x_n)),
\end{equation}
i.e., the probability of transitioning from $x_n$ to $x'$ is given by the probability of generating and accepting a reaction coordinate value $\xi(x')$, multiplied by the probability of reconstructing the microscopic sample $x'$, given its reaction coordinate value.

 Using the detailed balance condition on the macroscopic level, $\mathcal{D}(z'|  z_n) \ \bar{\mu}_0(z_n) = \mathcal{D}(z_n|  z') \ \bar{\mu}_0(z')$, the  acceptance probability is
\begin{align}
\alpha_F(x' |x_n) &= \min \left\{1, \frac{\mu(x') \ q(x_n|  x')}{\mu(x_n) \ q(x'|  x_n)} \right\} \nonumber \\ 
&= \min \left\{1, \frac{\mu(x') \ \bar{\nu} (x| z_n ) \ \mathcal{D}(z_n| z')}{\mu(x_n) \ \bar{\nu} (x'| z' ) \ \mathcal{D}(z'| z_n)} \right\}  \nonumber \\ 
&= \min \left\{1, \frac{\mu(x') \ \bar{\mu}_0(z_n) \ \bar{\nu}(x_n |  z_n) }{\mu(x_n) \ \bar{\mu}_0(z') \ \bar{\nu}(x' |  z')  } \right\}. \label{eq:fineacceptreject}
\end{align}
On acceptance, we set $x_{n+1} = x'$. If the microscopic sample is rejected, $x_{n+1} = x_n$.

\begin{remark} \label{rem:exactreconstruction}
There is a special situation in which the microscopic acceptance probability is always $1$. We call this situation `exact reconstruction' and this holds when one can write
\begin{equation} \label{eq:exactreconstruction5}
\mu(x) = \bar{\nu}(x |  \xi(x)) \ \bar{\mu}_0(\xi(x)).
\end{equation}
Since the microscopic distribution $\mu(x)$ can only be decomposed uniquely as $\mu(x) = \nu(x |  \xi(x)) \ \mu_0(\xi(x))$, we must have $\bar{\mu}_0(\xi(x)) = \mu_0(\xi(x))$ and $\bar{\nu}(x |  \xi(x)) = \nu(x |  \xi(x))$ for exact reconstruction.
When~\eqref{eq:exactreconstruction5} holds, it is easy to see that the reconstruction acceptance rate will always be $1$, so that the mM-MCMC  performs no unnecessary computational work during reconstruction. Hence, for computational efficiency, it is beneficial to choose the approximate macroscopic distribution $\bar{\mu}_0$ and the reconstruction distribution $\bar{\nu}$ in such way that the exact reconstruction property~\eqref{eq:exactreconstruction5} approximately holds.
\end{remark}

\begin{remark}
	The micro-macro Markov chain Monte Carlo algorithm can, in principle, be used with other coarse-graining schemes than reaction coordinates, as done in~\cite{kalligiannaki2012coupled,kalligiannaki2012multilevel}. For example, the kinetic Monte Carlo method~\cite{voter2007introduction} defines discrete macroscopic states as regions around the local minima in the potential energy $V(x)$. Sampling these macroscopic states then consists of sampling transition probabilities between the local minima. During reconstruction, we then construct a microscopic sample in the basin of attraction around the given local minimum. The formulation of Algorithm~\ref{algo:mM-MCMC} remains unaltered in such a situation.
\end{remark}

\subsection{The complete algorithm}

The complete mM-MCMC algorithm is shown in Algorithm~\ref{algo:mM-MCMC}.

\begin{algorithm}
\begin{flushleft}
Given a microscopic sample $x_n, \ n = 1, 2, \dots$ .

\vspace{4mm}
(i) \textbf{Restriction}: compute the reaction coordinate value $z_n = \xi(x_n)$.

\vspace{4mm}
(ii) \textbf{Macroscopic Proposal}: 
\begin{itemize}
\item	
Generate a reaction coordinate value $z' \sim q_0 ( \cdot | z_n)$.
\item
Accept the reaction coordinate value with probability
\[
\alpha_{CG}(z' |  z_n) = \min \left\{1, \frac{\bar{\mu}_0(z') \ q_0(z_n | z')}{\bar{\mu}_0(z_n) \ q_0(z' | z_n)} \right\}
\]
\item On acceptance, proceed to step (iii), otherwise set $x_{n+1} = x_n$ and repeat step (ii).
\end{itemize}

\vspace{4mm}
(iii) \textbf{Reconstruction}: 
\begin{itemize}
\item Generate a microscopic sample $x' \sim \bar{\nu}(\cdot | z')$.
\item	Accept the microscopic sample with probability
\[
\alpha_F(x'|x_n) = \min \left\{1, \frac{\mu(x') \ \bar{\mu}_0(z_n) \ \bar{\nu}(x_{n} | z_n) }{\mu(x_n) \ \bar{\mu}_0(z') \ \bar{\nu}(x' | z')  } \right\}.
\]
\item On acceptance, set $x_{n+1} = x'$ and return to step (i) for the next microscopic sample. Otherwise, set $x_{n+1} = x_n$ and generate a new reaction coordinate value in step (ii).
\end{itemize}
\end{flushleft}
\caption{The micro-macro Markov chain Monte Carlo method.}
\label{algo:mM-MCMC}
\end{algorithm}

 \section{Convergence and rate of convergence of mM-MCMC} \label{sec:convergence}
 In this section, we show that the mM-MCMC method with direct reconstruction converges to the correct microscopic invariant distribution $\mu$ and is ergodic under some mild assumptions on the macroscopic transition distribution $q_0(\cdot|\cdot)$, the approximate macroscopic invariant distribution $\bar{\mu}_0$ and the reconstruction distribution $\nu(\cdot|\cdot)$. Furthermore, we show that in case of exact reconstruction~\eqref{eq:exactreconstruction5}, mM-MCMC converges at the same rate to $\mu$ as the macroscopic MCMC sampler converges to its invariant distribution $\mu_0$ of reaction coordinate values. In Section~\ref{subsec:convergencemMMCMC}, we give an expression for the microscopic transition kernel of the mM-MCMC method, and we state and prove the convergence and ergodicity result. Afterwards, in Section~\ref{subsec:ergodicity}, we relate the rate of convergence of mM-MCMC to the microscopic invariant distribution to the rate of convergence of the corresponding macroscopic sampler, in case of exact reconstruction.
 
 \subsection{Convergence of mM-MCMC} \label{subsec:convergencemMMCMC}
 Before formulating the convergence statement, we give an expression for the transition kernel of the mM-MCMC method with direct reconstruction. The probability of transitioning from state $x$ to state $x'$ reads
\[
\mathcal{K}_{mM}(x'| x) = \begin{cases*} \begin{aligned}
\alpha_F(x' |x) \ \bar{\nu}(x' |  \xi(x')) \ \alpha_{CG}(\xi(x')|  \xi(x)) \ q_0(\xi(x')|  \xi(x)) \ \ \ x' \neq x 
\\1 - \int_{\mathbb{R}^d} \alpha_F(y | x) \ \bar{\nu}(y |  \xi(y)) \  \alpha_{CG}(\xi(y)|  \xi(x)) \ q_0(\xi(y)|  \xi(x)) dy \ \ \ x' = x. \\
\end{aligned} \end{cases*}
\]
Using the definition of the microscopic transition probability $q(x'|x)$
\begin{equation} \label{eq:q}
q(x'|  x) = \bar{\nu}(x'|  \xi(x')) \ \alpha_{CG}(\xi(x')|  \xi(x)) \ q_0(\xi(x')| \xi(x)),
\end{equation}
the full micro-macro transition kernel can be written as
\begin{equation} \label{eq:mMtransition kernel}
\begin{aligned}
\mathcal{K}_{mM}(x' | x) 
 &= \alpha_F(x' |x) \ \bar{\nu}(x' |  \xi(x')) \ \alpha_{CG}(\xi(x')|  \xi(x)) \ q_0(\xi(x')|  \xi(x))  \\ &+ \left(1 - \int_{\mathbb{R}^d} \alpha_F(y | x) \ \bar{\nu}(y |  \xi(y)) \  \alpha_{CG}(\xi(y)|  \xi(x)) \ q_0(\xi(y)|  \xi(x)) dy\right) \delta (x' - x) \\
 &=
 \alpha_F(x' | x) \ q(x' |  x) + \left(1 - \int_{\mathbb{R}^d} \alpha_F(y | x) \ q(y |  x) dy\right) \delta (x' - x), \\
\end{aligned}
\end{equation}

We then have the following Theorem:
\begin{theorem} \label{thm:convergence_mM-MCMC}
For every macroscopic transition distribution $q_0$ that is not identical to the exact, time-discrete, transition distribution of the effective dynamics~\eqref{eq:effdyn}, for every approximate macroscopic distribution $\bar{\mu}_0$ with $\text{supp}(\bar{\mu}_0) = H$ and every reconstruction distribution $\bar{\nu}$ such that $q(x'|  x)  > 0. \ \forall x, x' \in \mathbb{R}^d$, 

\begin{itemize}
	\item[(i)] the transition probability kernel~\eqref{eq:mMtransition kernel} satisfies the detailed balance condition with target measure $\mu$;
	
	\item[(ii)] the target measure $\mu$ is a stationary distribution of $\mathcal{K}_{mM}$;
	
	\item[(iii)] the chain $\{x_n\}$ is $\mu-$irreducible;
	
	\item[(iv)] the chain $\{x_n\}$ is aperiodic.
\end{itemize}
\end{theorem}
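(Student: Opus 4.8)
The plan is to recognize $\mathcal{K}_{mM}$ as an ordinary Metropolis--Hastings kernel on $\mathbb{R}^d$ with composite proposal density $q(x'|x)$ from \eqref{eq:q} and acceptance rate $\alpha_F$, and then to derive the four claims from the standard Metropolis--Hastings machinery together with the positivity hypotheses. For (i), I would first check that the acceptance rate \eqref{eq:fineacceptreject} coincides with the canonical ratio $\min\{1,\,\mu(x')q(x|x')/(\mu(x)q(x'|x))\}$ for the proposal $q$. Writing $z=\xi(x)$ and $z'=\xi(x')$, the reconstruction densities $\bar\nu$ enter directly through $q(x'|x)$ and $q(x|x')$, while the macroscopic factors obey the Metropolis--Hastings detailed balance $\bar\mu_0(z)\,\alpha_{CG}(z'|z)\,q_0(z'|z)=\bar\mu_0(z')\,\alpha_{CG}(z|z')\,q_0(z|z')$ built into \eqref{eq:coarseaccept}. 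Substituting this relation collapses the proposal ratio to $\mu(x')\bar\mu_0(z)\bar\nu(x|z)/(\mu(x)\bar\mu_0(z')\bar\nu(x'|z'))$, which is exactly \eqref{eq:fineacceptreject}. Detailed balance then follows from the elementary identity $\mu(x)\,q(x'|x)\,\alpha_F(x'|x)=\min\{\mu(x)q(x'|x),\,\mu(x')q(x|x')\}$, whose right-hand side is symmetric in $(x,x')$; the holding term proportional to $\delta(x'-x)$ balances trivially since it is supported on $x'=x$.

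Claim (ii) is then immediate: integrating the detailed balance relation $\mathcal{K}_{mM}(x'|x)\mu(x)=\mathcal{K}_{mM}(x|x')\mu(x')$ over $x$ and using that $\mathcal{K}_{mM}(\cdot|x')$ integrates to one gives $\int \mathcal{K}_{mM}(x'|x)\,\mu(x)\,dx=\mu(x')$, so $\mu$ is stationary. For (iii), the positivity hypotheses carry the argument: since $\mu$ is the strictly positive Gibbs density, $\bar\mu_0>0$ on $H=\mathrm{supp}(\bar\mu_0)$, and $q(x'|x)>0$ for all $x,x'$ (which forces $\bar\nu(\cdot|z)>0$ on every level set), the ratio inside $\alpha_F$ is finite and strictly positive, hence $\alpha_F(x'|x)>0$ everywhere. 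Therefore $\mathcal{K}_{mM}(x'|x)\ge \alpha_F(x'|x)\,q(x'|x)>0$ for $x'\neq x$, and for any Borel set $A$ with $\mu(A)>0$---equivalently positive Lebesgue measure---a single step already yields $\mathcal{K}_{mM}(x,A)=\int_A \mathcal{K}_{mM}(x'|x)\,dx'>0$, establishing $\mu$-irreducibility.

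For (iv) I would exhibit a strictly positive holding probability on a set of positive measure. Using the co-area formula \eqref{eq:coarea} and $\int_{\Sigma(z)}\bar\nu(y|z)\,\delta_{\xi(y)-z}(dy)=1$, the total probability of making a microscopic move from $x$ is $\int \alpha_F(y|x)\,q(y|x)\,dy\le \int_H \alpha_{CG}(z|\xi(x))\,q_0(z|\xi(x))\,dz\le 1$, with equality only if both accept-reject steps are trivial for almost every proposal. The hypothesis that $q_0$ is not the exact time-discrete transition kernel of the effective dynamics \eqref{eq:effdyn} rules out this degenerate all-acceptance scenario (which would require, in particular, $\alpha_{CG}\equiv 1$ and exact reconstruction in the sense of \eqref{eq:exactreconstruction5}), so the rejection probability is strictly positive on a set of positive $\mu$-measure; combined with $\mu$-irreducibility this gives aperiodicity.

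The step I expect to be the main obstacle is part (i): the careful bookkeeping that the composite proposal $q$---which bundles the macroscopic Metropolis--Hastings kernel $\alpha_{CG}q_0$ together with the reconstruction density $\bar\nu$---recombines with $\alpha_F$ into a genuine detailed-balance pair. In particular, one must correctly absorb the macroscopic accept-reject factor through the macroscopic detailed balance identity and correctly handle the co-area normalization of $\bar\nu$ on the level sets, while making sure the holding (rejection) mass that feeds the $\delta(x'-x)$ term is accounted for consistently between the macroscopic and microscopic layers. Once (i) is secured, claims (ii)--(iv) are routine consequences of the positivity hypotheses and the standard Tierney-type criteria for Metropolis--Hastings chains.
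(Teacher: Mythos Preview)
Your proposal is correct and follows essentially the same approach as the paper: recognize $\mathcal{K}_{mM}$ as a Metropolis--Hastings kernel with (sub-probability) proposal $q$ from \eqref{eq:q}, use macroscopic detailed balance $\bar\mu_0(z)\alpha_{CG}(z'|z)q_0(z'|z)=\bar\mu_0(z')\alpha_{CG}(z|z')q_0(z|z')$ to reduce the canonical acceptance ratio to \eqref{eq:fineacceptreject}, and then read off (ii)--(iv) from positivity and the nondegeneracy hypothesis on $q_0$. The paper carries out the same computation for (i) by direct expansion into a form symmetric in $(x,x')$ rather than by first identifying $\alpha_F$ as the canonical ratio, but the algebra is identical; your expectation that the co-area normalization of $\bar\nu$ enters in (i) is unnecessary---part (i) is purely a ratio manipulation and the paper's proof uses no co-area there.
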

Let us clarify the statement of Theorem~\ref{thm:convergence_mM-MCMC}. The second statement (ii) implies that the Markov chain has the target distribution $\mu$ as invariant measure, which is naturally a consequence of the detailed balance condition (i). The third and fourth statement ensure convergence and ergodicity of the Markov chain. Ergodicity means that the averages over one sample path, $N^{-1} \sum_{n=1}^N g(x_n)$ converge to averages over the stationary distribution $\int g \ d\mu$ almost surely as $N$ increases to infinity, for every $g \in L_1(\mathbb{R}^d)$. 

A proof similar to that of Theorem~\ref{thm:convergence_mM-MCMC} was already given in~\cite{kalligiannaki2012multilevel} in the specific context of stochastic models defined on lattice systems. Here, we extend that proof to molecular systems with reaction coordinates.

\begin{proof}[\textbf{Proof}]\mbox{}
	
	(i) The case $x' = x$ is trivial. Take  $x \neq x'$ and write for the transition kernel~\eqref{eq:mMtransition kernel}
	\begin{equation*}
	\begin{aligned}
	\mathcal{K}_{mM}(x'|x) \ \mu(x) &= \alpha_F(x' |  x) \ \bar{\nu}(x' |  \xi(x')) \ \alpha_{CG}(\xi(x') |  \xi(x)) \ q_0(\xi(x') |  \xi(x)) \ \mu(x) \\
	&= \min\left\{1, \ \frac{\mu(x') \ \bar{\mu}_0(\xi(x)) \ \bar{\nu}(x|  \xi(x))}{\mu(x) \ \bar{\mu}_0(\xi(x')) \ \bar{\nu}(x'| \xi(x'))}\right\}   \bar{\nu}(x' |  \xi(x'))  \\ &\times \min\left\{1, \ \frac{\bar{\mu}_0(\xi(x')) \ q_0(\xi(x) |  \xi(x')) }{\bar{\mu}_0(\xi(x)) \ q_0(\xi(x') |  \xi(x))} \right\}   q_0(\xi(x') |  \xi(x)) \ \mu(x) \\
	&= \min\left\{\ \mu(x) \ \bar{\mu}_0(\xi(x')) \ \bar{\nu}(x' |  \xi(x')),  \ \mu(x') \ \bar{\mu}_0(\xi(x)) \ \bar{\nu}(x|  \xi(x)) \right\} \\
	&\times \min\left\{ \frac{q_0(\xi(x') |  \xi(x))}{\bar{\mu}_0(\xi(x'))},  \frac{q_0(\xi(x)|  \xi(x'))}{ \bar{\mu}_0(\xi(x))} \right\}  \\
	&= \mathcal{K}_{mM}(x |  x') \ \mu(x'),
	\end{aligned}
	\end{equation*}
	since the third equality is symmetric in $x$ and $x'$.
	
	\vspace{1mm}
	(ii) Follows directly from (i).
	
	\vspace{1mm}
	(iii) To prove that the chain $\{x_n\}$ is $\mu$-irreducible, we need to show that $\mathcal{K}_{mM}(A |  x) > 0$ for all $x \in  \mathbb{R}^d$ and for all measurable sets $A \subset \mathbb{R}^d$ with $\mu(A) > 0$. Note that
	\begin{equation}
	\begin{aligned}
	\mathcal{K}_{mM}(A |  x) &= \int_A \mathcal{K}_{mM}(x' |  x) dx' \geq \int_{A \backslash \{x\}} \mathcal{K}_{mM}(x' |  x) dx' \\
	&= \int_{A \backslash \{x\}} \alpha_F(x'| x) \ \bar{\nu}(x' |  \xi(x')) \ \alpha_{CG}(\xi(x')| \xi(x)) \ q_0(\xi(x')|  \xi(x)) dx'.
	\end{aligned}
	\end{equation}
	The final three factors form $q(x'|x)$~\eqref{eq:q} which is strictly positive by assumption. Since $A \subset \text{supp}(\mu)$, the acceptance rate $\alpha_F$ is positive as well, proving that $\mathcal{K}(A|  x) > 0$.
	
	\vspace{1mm}
	(iv) For $\{x_n\}$ to be aperiodic, it is sufficient to show that there exists an $x \in \text{supp}(\mu)$ such that $\mathcal{K}(\{x\}| x) > 0$, implying that $x_{n+1} = x_n$ can occur with positive probability~\cite{kalligiannaki2012multilevel}.  We will prove this by contradiction. The transition kernel reads
	\[
	\mathcal{K}(\{x\}|  x) = 1 - \int_{\mathbb{R}^d} \alpha_F(x'|x) \ \bar{\nu}(x'|  \xi(x')) \ \alpha_{CG}(\xi(x')| \xi(x)) \ q_0(\xi(x') | \xi(x)) dx'.
	\]
	If $\mathcal{K}(\{x\}|  x) = 0$ for all $x \in \mathbb{R}^d$ then
	\begin{equation} \label{eq:Kequal0}
	\int_{\mathbb{R}^d} \alpha_F(x'|x) \ \bar{\nu}(x'| \xi(x')) \ \alpha_{CG}(\xi(x')| \xi(x)) \ q_0(\xi(x') | \xi(x)) dx' = 1,
	\end{equation}
	implying both acceptance probabilities $\alpha_F(x'|x) $ and $\alpha_{CG}(\xi(x')|  \xi(x))$ should be 1 almost everywhere, because  $q(x' |  x) > 0$ everywhere by assumption. This implies that the proposal kernel $\bar{\nu}(x'|  \xi(x')) \ q_0(\xi(x')|  \xi(x))$ samples from the correct invariant distribution $\mu$ without rejections, which is not the case because the macroscopic transition distribution $q_0$ is not identical to the exact, time-discrete, transition distribution of the effective dynamics~\eqref{eq:effdyn}. Hence, there exists some $x \in \mathbb{R}^d$ such that $\mathcal{K}(\{x\}|  x) > 0$.
\end{proof}

\subsection{Rate of convergence of mM-MCMC in case of exact reconstruction} \label{subsec:ergodicity}

Besides convergence and ergodicity of the mM-MCMC scheme, we can also relate the rate of convergence of the complete mM-MCMC scheme to rate of convergence of the macroscopic MCMC sampler.
In contrast to Theorem~\ref{thm:convergence_mM-MCMC}, we assume that $H$ is compact and that the exact reconstruction property~\eqref{eq:exactreconstruction2} holds. We are not aware whether a similar result holds when the exact reconstruction property does not hold or when $H$ is not compact. The proof relies on a  expression for the mM-MCMC transition kernel that we derive in Appendix~\ref{app:relationKandD}. The remainder of the proof is a straightforward calculation.

\begin{theorem} \label{thm:ergodicity_mM-MCMC}
Assume that the exact reconstruction property~\eqref{eq:exactreconstruction5} holds, that $\nu(\cdot|  z)$ is bounded from above uniformly for any reaction coordinate value $z$ and that the image of the reaction coordinate $H$ is compact. Then, there exist positive constants $\eta$ and $ \kappa < 1$ such that
\begin{equation} \label{eq:ergodic_bound_TV}
\norm{\mathcal{K}_{mM}^n(\cdot|x)-\mu(\cdot)}_{TV} \leq \norm{\mathcal{D}^n(\cdot|\xi(x)) - \mu_0(\cdot)}_{TV} + \eta \ \kappa^n,
\end{equation}
for all $x \in \mathbb{R}^d$. Here, $\mathcal{K}_{mM}^n(x'|x)$ is the probability~\eqref{eq:mMtransition kernel} of reaching the microscopic sample $x'$ after $n$ mM-MCMC steps with initial value $x$. Similarly, $\mathcal{D}^n(\xi(x')|\xi(x))$ is the probability~\eqref{eq:coarsetransitionkernel} of reaching reaction coordinate value $\xi(x')$ after $n$ steps of the macroscopic MCMC sampler with initial value $\xi(x)$.
\end{theorem}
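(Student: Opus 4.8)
The plan is to exploit the fact that, under exact reconstruction~\eqref{eq:exactreconstruction5}, the microscopic acceptance probability is identically $1$ (Remark~\ref{rem:exactreconstruction}), so that the transition kernel~\eqref{eq:mMtransition kernel} collapses to
\[
\mathcal{K}_{mM}(x'|x) = \nu(x'|\xi(x'))\,\alpha_{CG}(\xi(x')|\xi(x))\,q_0(\xi(x')|\xi(x)) + \rho(\xi(x))\,\delta(x'-x),
\]
where $\rho(z) = 1 - \int_H \alpha_{CG}(y|z)\,q_0(y|z)\,dy$ is exactly the macroscopic rejection probability; the identity $\int_{\Sigma(z')}\nu(x'|z')\,\delta_{\xi(x')-z'}(dx')=1$ from the co-area formula~\eqref{eq:coarea} is what turns the microscopic rejection mass into the macroscopic one. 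First I would record the structural observation that this makes $z_n = \xi(x_n)$ a Markov chain in its own right, driven \emph{exactly} by the macroscopic kernel $\mathcal{D}$~\eqref{eq:coarsetransitionkernel}, since the proposal, the acceptance $\alpha_{CG}$, and the reconstructed value $\xi(x')=z'$ all depend on $x_n$ only through $z_n$.

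Next I would derive the closed form for the iterated kernel announced for Appendix~\ref{app:relationKandD}. Writing $f_n := \mathcal{K}_{mM}^n(\cdot|x)$ and letting $g_n := \mathcal{D}^n(\cdot|\xi(x))$ be its reaction-coordinate marginal, the one-step recursion above shows that the \emph{defect} $e_n(x') := f_n(x') - \nu(x'|\xi(x'))\,g_n(\xi(x'))$ has vanishing marginal and obeys the scalar recursion $e_{n+1}(x') = \rho(\xi(x'))\,e_n(x')$. Hence $e_n = \rho(\xi(\cdot))^n e_0$ and
\[
\mathcal{K}_{mM}^n(x'|x) = \nu(x'|\xi(x'))\,\mathcal{D}^n(\xi(x')|\xi(x)) + \rho(\xi(x))^n\bigl(\delta(x'-x) - \nu(x'|\xi(x))\bigr),
\]
the correction being supported on the single level set $\Sigma(\xi(x))$, where $\rho(\xi(x'))^n$ is constant. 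I expect the uniform upper bound on $\nu(\cdot|z)$ to be what makes these iterated co-area and Fubini manipulations rigorous and guarantees the reconstructed piece is a genuine finite density.

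With this formula in hand the remainder is the straightforward calculation. Using the exact-reconstruction identity $\mu(x') = \nu(x'|\xi(x'))\,\mu_0(\xi(x'))$, I would write $\mathcal{K}_{mM}^n(\cdot|x) - \mu = R\bigl[\mathcal{D}^n(\cdot|\xi(x)) - \mu_0\bigr] + \rho(\xi(x))^n\bigl(\delta_x - \nu(\cdot|\xi(x))\bigr)$, where $R$ is the reconstruction (mixture) map $h \mapsto \nu(\cdot|\xi(\cdot))\,h(\xi(\cdot))$. The lemma-level fact driving everything is that $R$ is a total-variation isometry: since each $\nu(\cdot|z)$ is a probability measure, $\norm{Rh}_{TV} = \int_H \norm{\nu(\cdot|z)}_{TV}\,d|h|(z) = \norm{h}_{TV}$ for every signed macroscopic measure $h$. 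Applying this to $h = \mathcal{D}^n(\cdot|\xi(x)) - \mu_0$ reproduces the first term of~\eqref{eq:ergodic_bound_TV} exactly, and the triangle inequality leaves the correction bounded by $\rho(\xi(x))^n\,\norm{\delta_x - \nu(\cdot|\xi(x))}_{TV}$, which is at most an absolute constant $\eta$ (e.g.\ $\eta=2$) times $\rho(\xi(x))^n$.

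The hard part will be the geometric factor: I must produce a single $\kappa < 1$ valid for every starting point, i.e.\ establish $\kappa := \sup_{z\in H}\rho(z) < 1$. Since $\rho$ is purely macroscopic, this reduces to bounding the acceptance probability $1-\rho(z) = \int_H \alpha_{CG}(y|z)\,q_0(y|z)\,dy$ away from zero uniformly in $z$. Here the compactness of $H$ is essential: the integrand is positive at each $z$ (both $q_0$ and the Metropolis ratio $\alpha_{CG}$ are positive), so a continuity-plus-compactness argument yields $\inf_{z\in H}\bigl(1-\rho(z)\bigr) > 0$ and hence $\kappa<1$. The care points I anticipate are verifying continuity of $z\mapsto\rho(z)$ so that compactness can be invoked, and handling the total-variation bookkeeping for mutually singular objects (the atom at $x$, the fiber measure $\nu(\cdot|\xi(x))$, and the absolutely continuous bulk) rather than ordinary densities.
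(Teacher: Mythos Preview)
Your proposal is correct and follows essentially the same route as the paper: the iterated-kernel identity you derive via the defect recursion $e_{n+1}(x')=\rho(\xi(x'))\,e_n(x')$ is exactly the formula proved by direct induction in Appendix~\ref{app:relationKandD}, and the subsequent split via $\mu=\nu\cdot\mu_0$, the TV bound through the reconstruction map, and the compactness argument for $\kappa=\sup_{z\in H}\rho(z)<1$ all mirror the paper's proof. Your packaging is slightly tighter in two places---the recursion on $e_n$ is cleaner than the paper's brute-force induction, and your observation that the reconstruction map $R$ is in fact a TV isometry (not merely a contraction) is correct, though the paper only needs and only proves the inequality---but these are presentational rather than substantive differences.
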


\begin{proof}[\textbf{Proof}] \mbox{} 
	In Appendix~\ref{app:relationKandD}, we prove that we can write the $n-$th iteration of the mM-MCMC transition kernel with direct reconstruction as
	\begin{equation} \label{eq:mMkernelrewritten}
	\mathcal{K}_{mM}^n(x'|  x) = \nu(x'| \xi(x')) \ \mathcal{D}^n(\xi(x')| \xi(x)) + \mathcal{C}(\xi(x))^n \left(\delta(x'-x) - \nu(x'|\xi(x')) \ \delta\left(\xi(x')-\xi(x)\right)\right),
	\end{equation}
	with $C(\xi(x)) = 1 - \int_H \alpha_{CG} (z|\xi(x)) \ q_0(z | \xi(x)) dz$.
	This form for the $n$-th iteration of the micro-macro transition kernel has the advantage that it can be written as the product of the reconstruction distribution with the $n$-th iterate of the macroscopic invariant distribution plus another term. This form will come in handy later in the proof when we need to integrate the micro-macro transition kernel.

	Subtracting the invariant measure $\mu(x') = \nu(x'| \xi(x')) \ \mu_0(\xi(x'))$ from this expression and defining the constant $\eta$ is defined as $ \sup_{x \in \mathbb{R}^d} \norm{\delta(\cdot-x) + \nu(\cdot|\xi(\cdot)) \delta\left(\xi(\cdot)-\xi(x)\right)}_{TV}$, we can bound the total variation distance as
	\begin{equation} \label{eq:boundedTV}
	\begin{aligned}
	\norm{\mathcal{K}_{mM}(\cdot|  x) -\mu(\cdot)}_{TV} &\leq \norm{\nu(\cdot|\xi(\cdot)) \left(\mathcal{D}^n(\cdot|\xi(x))  + \mu_0(\cdot)\right)}_{TV}  \\ &+ \norm{\mathcal{C}(\xi(x))^n \left(\delta(x'-x) - \nu(x'|\xi(x')) \ \delta\left(\xi(x')-\xi(x)\right)\right) }_{TV} \\
	&\leq \norm{\nu(\cdot| \xi(\cdot))  \left(\mathcal{D}^n(\xi(\cdot)| \xi(x)) - \mu_0(\cdot)\right)}_{TV} + \mathcal{C}(\xi(x))^n \ \eta.
	\end{aligned}
	\end{equation}
	Note that the constant $\eta$ is finite because $\nu$ is bounded and the total variation distance of a delta function is $1$.
	
	We now bound each of the two terms in~\eqref{eq:boundedTV} independently. Using a property of the total variation distance~\cite[Prop. 3(a)]{roberts2004general}, we rewrite the first term as
	\begin{equation*}
	\begin{aligned}
	\norm{\nu(\cdot| \xi(\cdot))  \Big(\mathcal{D}^n(\xi(\cdot)| \xi(x)) - \mu_0(\cdot)\Big)}_{TV}  &= \sup_{g: \ \mathbb{R}^d\to [0,1]} \left| \int_{\mathbb{R}^d} g(y) \ \nu(y|\xi(y))  \Big(\mathcal{D}^{n}(\xi(y)|\xi(x)) - \mu_0(\xi(y))\Big) dy \right| \\
	&= \sup_{g: \ \mathbb{R}^d\to [0,1]} \left| \int_{H}  \left(\mathcal{D}^{n} (z|\xi(x)) - \mu_0(z)\right) \int_{\Sigma(z)} \frac{g(x) \nu(x|z)}{\norm{\nabla \xi(x)}} d\sigma_{\Sigma(z)}(x) dz \right| \\
	&= \sup_{g: \ \mathbb{R}^d\to [0,1]} \left| \int_{H} \mathbb{E}_{\nu}[g](z) \left(\mathcal{D}^{n} (z|\xi(x)) - \mu_0(z)\right) dz \right| \\
	&\leq \sup_{\tilde{g}: \ H \to [0,1]}  \left| \int_{H} \tilde{g}(z) \left(\mathcal{D}^{n} (z|\xi(x)) - \mu_0\right) dz \right| \\
	&= \norm{\mathcal{D}^{n}(\cdot |\xi(x)) - \mu_0(\cdot)}_{TV}.
	\end{aligned}
	\end{equation*}
	The first equality stems from the definition of total variation norm~\cite{roberts2004general}, and the second equality is due to the co-area formula~\eqref{eq:coarea}. Indeed, we have that the reconstruction distribution is given by $\nu(y|\xi(y)) = \mu(y)/\mu_0(\xi(y))$ inside the integral over $\mathbb{R}^d$~\eqref{eq:exactreconstruction2}, and this expression becomes $\mu(y)/\mu_0(z) \norm{\nabla \xi(y)}^{-1}$ inside the integral over $\Sigma(z)$. On the third line, we define the temporary variable
	\[
	\mathbb{E}_\nu[g](z) = \int_{\Sigma(z)} \frac{g(x) \nu(x|z)}{\norm{\nabla \xi(x)}} d\sigma_{\Sigma(z)}(x) \in [0,1],
	\]
	and the inequality on the fourth line is because we take the supremum over a possibly larger class of bounded functions between 0 and 1.
	
	To bound the the factor $\mathcal{C}(\xi(x))$ in the second term of~\eqref{eq:boundedTV}, we use compactness of $H$. The functions $q_0$ and $\alpha_{CG}$ are strictly positive and hence the integral
	\[
	\int_H \alpha_{CG}(y|\xi(x)) \ q_0(y| \xi(x)) dy
	\]
	is strictly greater than $0$ for every $\xi(x) \in H$. By compactness, the infimum of this integral for all $\xi(x)$ is hence also strictly positive, proving that the supremum of $\mathcal{C}(\xi(x))$ is strictly smaller than $1$, i.e., $\sup_{x \in \mathbb{R}^d} \mathcal{C}(\xi(x)) = \kappa < 1$. Putting both bounds together, we conclude that
	\[
	\norm{\mathcal{K}^n_{mM}(\cdot| \ x) -\mu(\cdot)}_{TV} \leq \norm{\mathcal{D}^n(\xi(\cdot)| \xi(x)) - \mu_0(\cdot)}_{TV} + \eta \ \kappa^n,
	\]
	proving the theorem.
\end{proof}
A consequence of Theorem~\ref{thm:ergodicity_mM-MCMC} is that mM-MCMC inherits all ergodicity properties from the macroscopic MCMC sampler. For instance, when $\mathcal{D}$ is uniformly ergodic, that is, we can bound $\norm{\mathcal{D}^n(\cdot|z) - \mu_0(z)}_{TV}$ by $C  \rho^n$ with $C > 0$ and $\rho < 1$, then the mM-MCMC method is also uniformly ergodic since we can bound $\norm{\mathcal{K}_{mM}(\cdot|  x) -\mu(\cdot)}_{TV} $ uniformly in $x$ by $(C + \eta) \ \max\{\rho, \kappa\}^n$. We can draw a similar conclusion for other types of ergodicity of the macroscopic sampler. We will numerically show in Section~\ref{subsubsec:effgaineps}, that the mM-MCMC method with exact reconstruction indeed results in more efficient results than when the exact reconstruction property is not satisfied.

\section{Numerical illustrations} \label{sec:results}
In this section, we numerically investigate the efficiency of the mM-MCMC scheme with direct reconstruction on two molecular problems: a three-atom molecule and the molecule butane. We compare the efficiency gain over the microscopic MALA (Metropolis-adjusted Langevin) method, where we specifically study the impact of three design choices in the mM-MCMC scheme: the macroscopic invariant distribution $\bar{\mu}_0$, the macroscopic proposal distribution $q_0$ and the reconstruction distribution $\bar{\nu}$. The efficiency gain criterion for a proper comparison of mM-MCMC with the MALA method is explained in Section~\ref{subsec:effcriterion}, and the numerical results for the three-atom molecule and butane are shown in Sections~\ref{subsec:threeatom} and~\ref{subsec:butane} respectively.

\subsection{Efficiency criterion} \label{subsec:effcriterion}
Consider a scalar function $F: \mathbb{R}^d \to \mathbb{R}$ and suppose we are interested in the average of $F$ with respect to the Gibbs measure $\mu$,
\[
\mathbb{E}_{\mu}[F] = \int_{\mathbb{R}^d} F(x) \ d\mu(x).
\]
If we sample the invariant measure $\mu$ using an MCMC method, we can estimate the above value as $\hat{F} = N^{-1} \sum_{n=1}^N F(x_n)$ with an ensemble of microscopic samples $\{x_n\}_{n=1}^N$. The variance on this estimate is
\begin{equation} \label{eq:varF}
\text{Var}[\hat{F}] = \frac{\sigma_F^2 \ K_{\text{corr}}}{N},
\end{equation}
where $\sigma_F^2$ is the inherent variance of $F$, 
\[
\sigma_F^2 = \int_{\mathbb{R}^d} \ \left(F(x) - \mathbb{E}_{\mu}[F] \right)^2 d\mu(x),
\]
and $K_{\text{corr}}$ is defined as
\[
K_{\text{corr}} = 1 + \frac{2}{\sigma_F^2} \ \sum_{n=1}^N \ \mathbb{E}\left[(F(x_n) - \mathbb{E}_{\mu}[F(x_n)])(F(x_0) - \mathbb{E}_{\mu}[F(x_0)] )\right],
\]
with $x_0$ the initial value of the Markov chain~\cite{meyn2012markov} .

The extra factor $K_{\text{corr}}$ in~\eqref{eq:varF} arises because the Markov chain Monte Carlo samples are not independent of each other. The higher $K_{\text{corr}}$, the more dependent the MCMC samples and the higher the variance~\eqref{eq:varF}. Another interpretation of the correlation parameter is that the `effective' number of samples is $N/ K_{\text{corr}}$.

In the following numerical experiments, we are interested in reducing the variance on the estimator $\hat{F}$ with mM-MCMC using the same runtime, compared the the microscopic MALA algorithm. Equivalently, we want to increase the effective number of samples $N/K_{\text{corr}}$ for a fixed runtime. We therefore define the efficiency gain of mM-MCMC over the microscopic MALA method as
\begin{equation} \label{eq:effgain}
\text{Gain} = \frac{\text{Var}[\hat{F}]_{\text{micro}}}{\text{Var}[\hat{F}]_{\text{mM}}} \ \frac{T_{\text{micro}}^N}{T_{\text{mM}}^N} = \frac{K_{\text{corr, micro}}}{K_{\text{corr, mM}}} \ \frac{T_{\text{micro}}^N}{T_{\text{mM}}^N}.
\end{equation}
Here, $T_{\text{micro}}^N$ is the measured execution time of the microscopic MCMC method for a fixed number of sampling steps $N$ and $T_{\text{mM}}^N$ is the measured execution time for the same number of steps $N$ of the mM-MCMC scheme. 

Usually, the execution time $T_{mM}^N$ of the mM-MCMC method with direct reconstruction is lower than the execution time $T_{micro}^N$ for the MALA method for the same number of sampling steps $N$, because the mM-MCMC method generates fewer distinct microscopic samples. Indeed, when a reaction coordinate value is rejected at the macroscopic level, we immediately keep the current microscopic sample without evaluating the (expensive) microscopic potential energy. The exact decrease in execution time for the same number of sampling steps depends on macroscopic acceptance rate. We will also show that the effective number of samples $N/K_{\text{corr, mM}}$ of the mM-MCMC scheme is usually orders of magnitude higher than the effective number of samples of the MALA scheme, $N/K_{\text{corr, micro}}$. By first sampling a reaction coordinate value with a large time steps $\Delta t$ at the macroscopic level, the correlation between two reaction coordinate values will, on average, be lower than the correlation between two microscopic samples generated with a small time step by the MALA method. Currently, however, we have no analytic formulas linking the correlations at the macroscopic and microscopic levels, so we will demonstrate this claim numerically. Combining both the decrease in execution time for the same number of sampling steps and the increase of the effective number of samples, we expect that mM-MCMC will be able to gain over the MALA method for moderate to large time-scale separations. We will show that the higher the time-scale separation, the larger the efficiency gain will be.

\subsection{The three-atom molecule} \label{subsec:threeatom}
\paragraph{\textbf{Model problem}}
In this section, we consider the mM-MCMC algorithm on a simple, academic, three-atom molecule, as first introduced in~\cite{legoll2010effective}. The three-atom molecule has a central atom $B$, that we fix at the origin of the two-dimensional plane, and two outer atoms, $A$ and $C$. To fix the superfluous degrees of freedom, we constrain atom $A$ to the $x-$axis, while $C$ can move freely in the plane. The three-atom molecule is depicted on Figure~\ref{fig:triatommolecule2}. 
\begin{figure}
	\centering
	\includegraphics[width=0.3\linewidth]{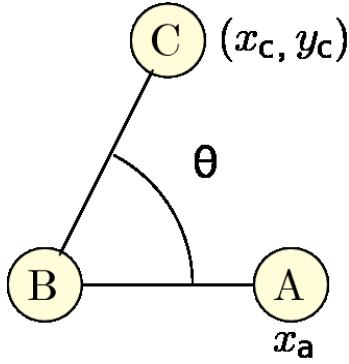}
	\caption{The three-atom molecule. Atom $A$ is constraint to the $x$-axis with $x$-coordinate $x_a$, atom $B$ is fixed at the origin of the plane and atom $C$ lies om the two-dimensional plane with Cartesian coordinates $(x_c, y_c)$.}
	\label{fig:triatommolecule2}
\end{figure}

The potential energy for the three-atom system consists of three terms,
\begin{equation} \label{eq:trheeatompotential}
V(x_a, x_c, y_c) = \frac{1}{2\varepsilon} \ (x_a-1)^2 + \frac{1}{2\varepsilon} \ (r_c-1)^2 + \frac{208}{2}\left(\left(\theta-\frac{\pi}{2}\right)^2 - 0.3838^2\right)^2,
\end{equation}
where $x_a$ is the $x-$coordinate of atom $A$ and $(x_c, y_c)$ are the Cartesian coordinates of atom $C$. The bond length $r_c$ between atoms $B$ and $C$ and the angle $\theta$ between atoms $A$, $B$ and $C$ are defined as
\begin{equation*}
\begin{aligned}
r_c &= \sqrt{x_c^2 + y_c^2} \\ 
\theta &= \arctantwo (y_c, x_c).
\end{aligned}
\end{equation*}
The first term in~\eqref{eq:trheeatompotential} describes the vibrational potential energy of the bond between atoms $A$ and $B$, with equilibrium length $1$. Similarly, the second term describes the vibrational energy of the bond between atoms $B$ and $C$ with bond length $r_c$. Finally, the third term determines the potential energy of the angle $\theta$ between the two outer atoms, which has an interesting bimodal behaviour. The distribution of $\theta$ has two peaks, one at $\frac{\pi}{2} -0.3838$ and another at $\frac{\pi}{2} + 0.3838$. 

The reaction coordinate that we consider in this section is the angle $\theta$, i.e.,
\begin{equation}
\xi(x) = \theta(x),
\end{equation}
since this variable is the slow component of the three-atom molecule. Additionally, the angle $\theta$ is also independent of the time-scale separation, given by $\varepsilon$.

\paragraph{\textbf{Outline of this section}}
In the next set of experiments, we study the effect of choice of the approximate macroscopic invariant distribution $\bar{\mu}_0$, the macroscopic transition distribution $q_0$ and the reconstruction distribution $\bar{\nu}$ on the efficiency of the resulting mM-MCMC scheme. More specifically, we investigate the efficiency gain of mM-MCMC on the estimated expected value of $\theta$, for different parameter settings and two values of the small-scale parameter $\varepsilon$: $\varepsilon=10^{-4}$ and $\varepsilon=10^{-6}$.

 In Section~\ref{sec:direct_reconstruction_illustration}, we will define the exact free energy of $\theta$ and the exact reconstruction distribution $\nu$ such that exact reconstruction~\eqref{eq:exactreconstruction2} holds. We also visually illustrate the superior performance of mM-MCMC over the microscopic MALA method on a histogram fit of the bimodal distribution of $\theta$. In Section~\ref{subsubsec:Aq}, we subsequently consider the impact of three choices of the approximate macroscopic distribution $\bar{\mu}_0$, combined with two choices of the macroscopic transition distributions $q_0$ on the efficiency gain of mM-MCMC, while we keep the correct reconstruction distribution $\nu$~\eqref{eq:conditionalxgivenz} fixed. Third, in Section~\ref{subsubsec:Anu}, we fix the macroscopic transition distribution $q_0$ and study the impact of the two choices for the reconstruction distribution $\bar{\nu}$, combined with the same three options for the approximate macroscopic distribution $\bar{\mu}_0$ used in Section~\ref{subsubsec:Aq} on the efficiency gain of mM-MCMC. Finally, we investigate the efficiency gain of mM-MCMC over the MALA algorithm for a range of time-scale separations in Section~\ref{subsubsec:effgaineps}.

\subsubsection{Visual inspection of the performance of mM-MCMC\label{sec:direct_reconstruction_illustration}}
In the three-atom molecule, the exact free energy function of $\theta$ is directly visible in the potential energy function~\eqref{eq:trheeatompotential}. Indeed, the exact time-invariant distribution of the chosen reaction coordinate is
\begin{equation} \label{eq:threeatommacroinvariant}
\mu_0(\theta) \propto \exp\left(-\beta A(\theta)\right), \ \ A(\theta) = \frac{208}{2}\left(\left(\theta-\frac{\pi}{2}\right)^2 - 0.3838^2\right)^2.
\end{equation}
Therefore, we use the exact invariant distribution of the reaction coordinates in the mM-MCMC method, i.e., $\bar{\mu}_0 = \mu_0$.
When the angle $\theta$ of a molecule is given, for example after the macroscopic proposal step in the mM-MCMC method, we need to reconstruct the position $x_a$ of molecule $A$ and the bond length $r_c$ between atoms $B$ and $C$. To maximize the efficiency gain of mM-MCMC, we choose the reconstruction distribution such that the microscopic acceptance rate is always $1$. We thus choose the reconstruction distribution such that the exact reconstruction property holds~\eqref{eq:exactreconstruction2}, i.e., we take $\bar{\nu} = \nu$, as defined in~\eqref{eq:conditionalxgivenz}. For the three-atom molecule, this reconstruction distribution takes the form
\begin{equation} \label{eq:threeatomreconstruction}
\nu(x_a, r_c | \ \theta) \propto \exp\left(-\frac{\beta}{2\varepsilon} \ (x_a-1)^2\right) \exp\left(-\frac{\beta}{2\varepsilon} \ (r_c-1)^2\right),
\end{equation}
which is defined on the sub-manifold $\Sigma(\theta)= \{(x_a, x_c, y_c) \in \mathbb{R}^3 | \ x_c = r \cos(\theta), y_c=r \sin(\theta), r \geq 0\}$ of constant reaction coordinate value. While sampling the reconstruction distribution, we also need to make sure that $r_c$ is positive. Therefore, we first sample $r_c$ from the Gaussian distribution with mean $1$ and variance $\varepsilon \beta^{-1}$ and reject the proposal whenever $r_c$ is negative. If the proposed value is positive, we accept.

Furthermore, we base the macroscopic proposals on the exact effective dynamics of $\theta$ with time step $\Delta t = 0.01$. In this model problem, the effective dynamics of $\theta$ reduces to an overdamped Langevin equation
\begin{equation} \label{eq:effdyntheta}
d\theta = -\nabla A(\theta) dt + \sqrt{2 \beta^{-1}} dW,
\end{equation}
 since one can easily verify that $\norm{\nabla \theta(x)} = 1$ for all $x \in \mathbb{R}^3$. The initial condition to the Markov chain is $(x_a, x_c, y_c) = (1, 0, 1)$.

For the numerical experiment, we choose two values for the time-scale separation parameter, $\varepsilon = 10^{-4}$ and $\varepsilon = 10^{-6}$ and sample the three-atom molecule with $N=10^6$ microscopic samples. For simplicity, we define the inverse temperature parameter as $\beta=1$. The proposals of the microscopic MALA method are based on the Euler-Maruyama discretization of the overdamped Langevin dynamics~\eqref{eq:overdamped_langevin} with microscopic time step size $\delta t = \varepsilon$. On Figure~\ref{fig:threeatom_mM_direct}, we visually compare the histograms of $\theta$ obtained by MALA and mM-MCMC.

\begin{figure}
	\centering
	\begin{subfigure}[b]{0.5\textwidth}
		\centering
		\includegraphics[width=0.85\linewidth]{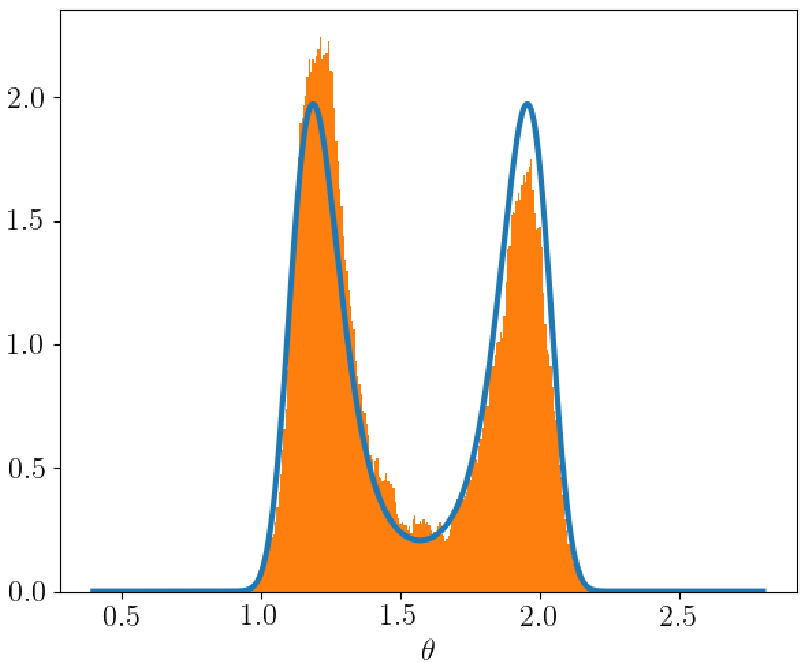}
	\end{subfigure}%
	\begin{subfigure}[b]{0.5\textwidth}
		\centering
		\includegraphics[width=0.85\linewidth]{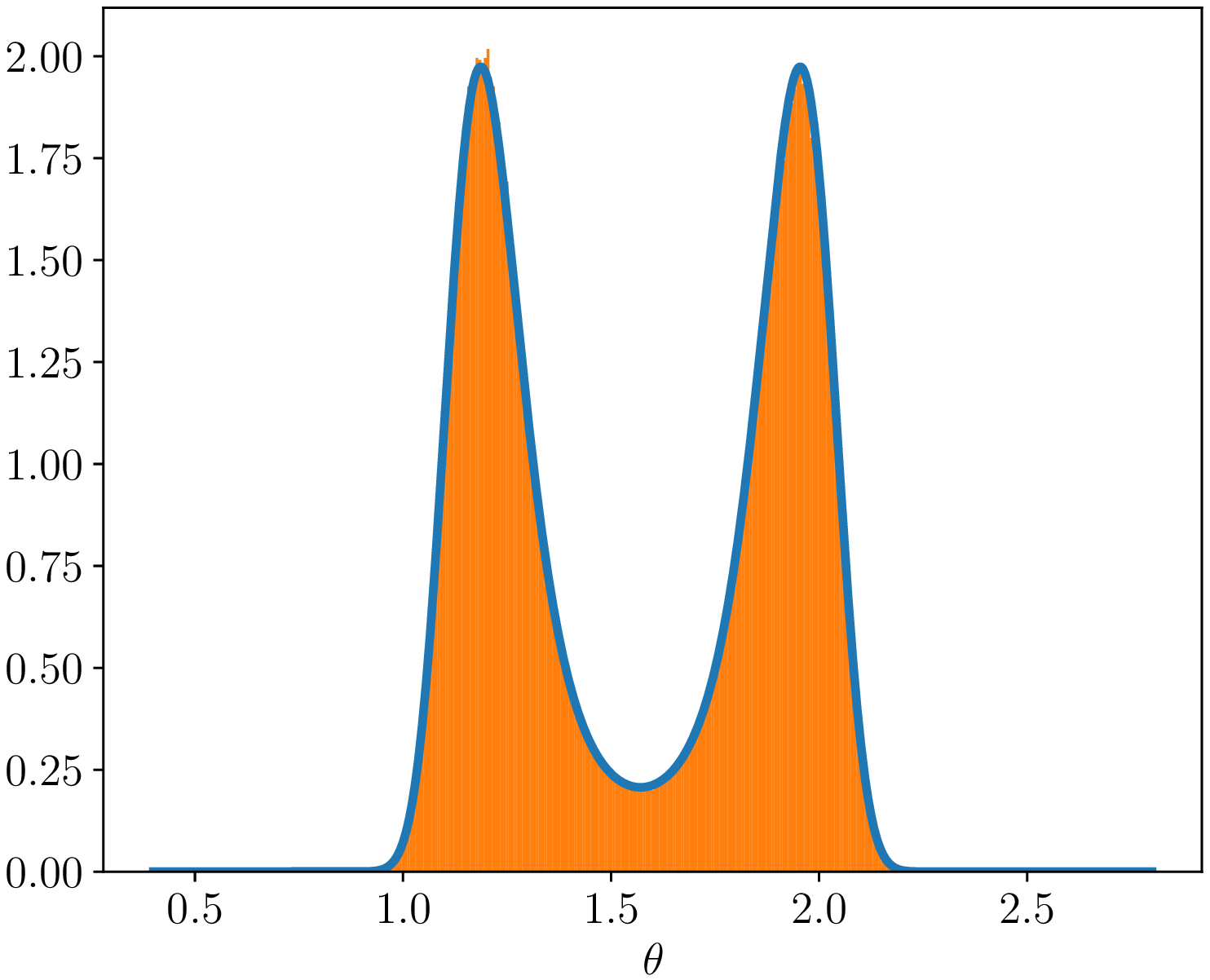}
	\end{subfigure}
	\begin{subfigure}[b]{0.5\textwidth}
		\centering
		\includegraphics[width=0.85\linewidth]{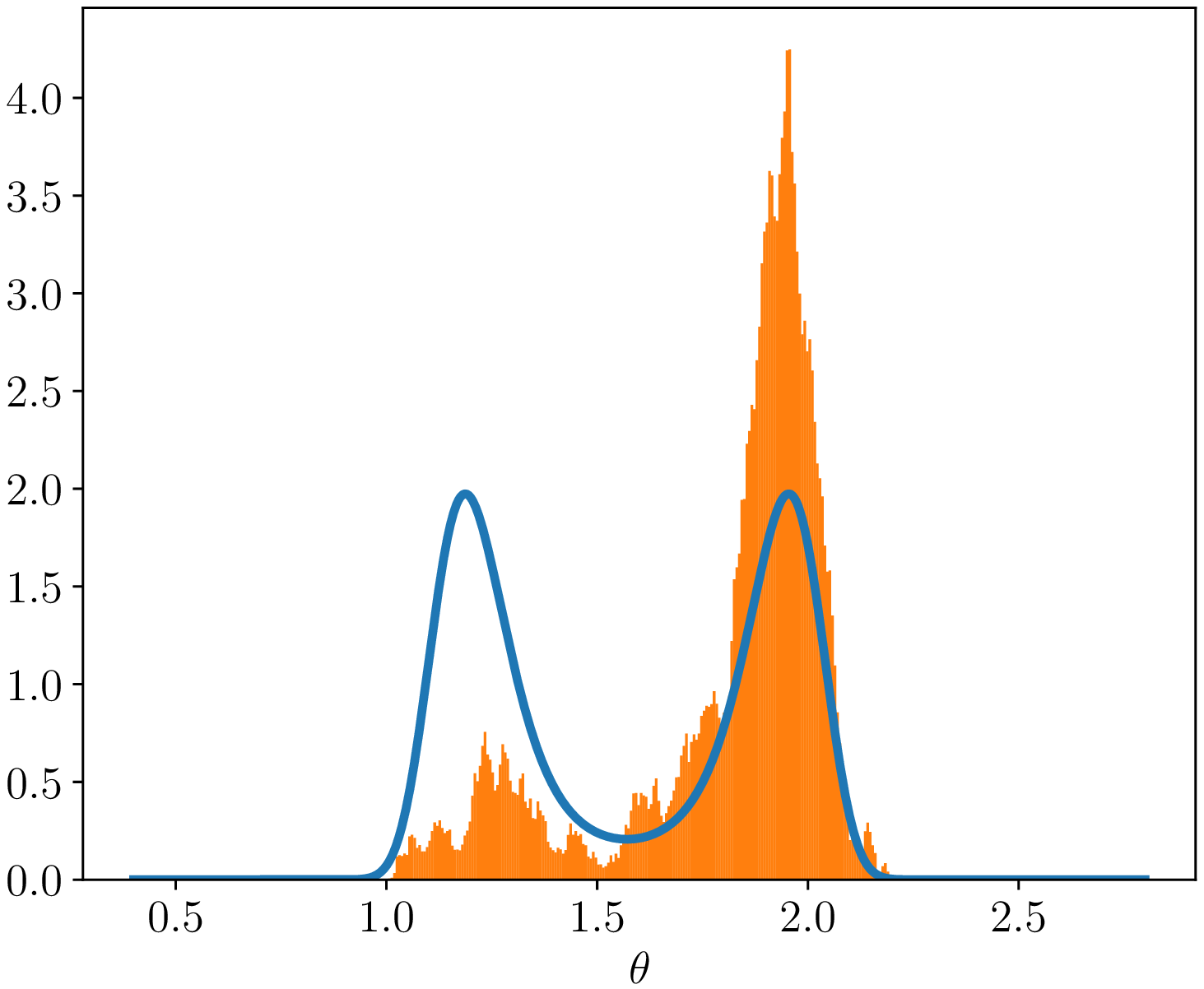}
	\end{subfigure}%
	\begin{subfigure}[b]{0.5\textwidth}
		\centering
		\includegraphics[width=0.85\linewidth]{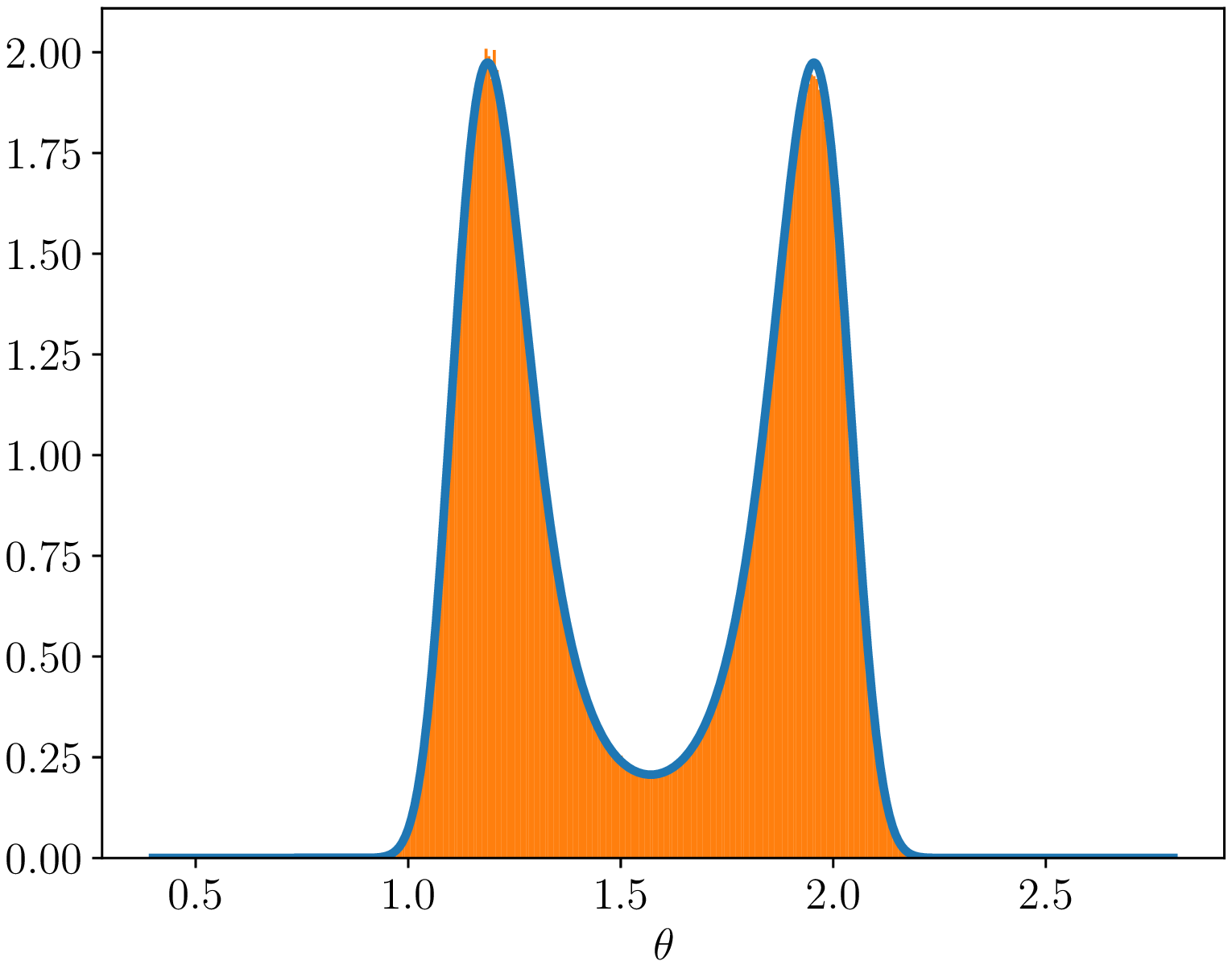}
	\end{subfigure}
	\caption{Visual representation of the histogram of $\theta$ of the microscopic MCMC method (MALA) (left) and mM-MCMC with direct reconstruction (right) on the three-atom molecule, with reaction coordinate $\theta$. The simulation parameters are $\varepsilon=10^{-5}$ (top) and $\varepsilon=10^{-6}$ (bottom) and the number of samples is $N=10^6$. The MALA method remains stuck in the potential well of $\theta$ around $\pi/2+0.3838$, while the mM-MCMC method samples the distribution well.}
	\label{fig:threeatom_mM_direct}
\end{figure}

Clearly, the MALA method remains stuck for a long time in the potential well around $\pi/2+0.3838$ when $\varepsilon=10^{-6}$. Expectedly, the mixing improves, however, when $\varepsilon$ increases to $10^{-4}$. The mM-MCMC method is able to sample the distribution of $\theta$ accurately, regardless of the time-scale separation.

\subsubsection{Impact of $\bar{A}$ and $q_0$ on the efficiency gain} \label{subsubsec:Aq}
\paragraph{\textbf{Experimental setup}}
In the second numerical experiment on the three-atom molecule, we specifically investigate the impact of the choice of approximate macroscopic distribution $\bar{\mu}_0$ (in the form of an approximate free energy) and the choice of macroscopic transition distribution $q_0$ on the efficiency gain of mM-MCMC. We define three choices for the approximate macroscopic distribution and for two choices of the macroscopic transition distribution. The three choices for the approximate free energy functions are
\begin{align} \label{eq:freeenergies}
\bar{A}^1(\theta) &= \frac{208}{2}\left(\left(\theta-\frac{\pi}{2}\right)^2 - 0.3838^2\right)^2, \nonumber \\ 
\bar{A}^2(\theta) &= \frac{208}{2}\left(\left(\theta-\frac{\pi}{2}\right)^2 - 0.4838^2\right)^2, \\
\bar{A}^3(\theta) &= \frac{208}{2}\left(\left(\theta-\frac{\pi}{2}\right)^2 - 0.3838^2\right)^2 + \cos(\theta). \nonumber
\end{align}
The first formula is the exact free energy function as stated in Section~\ref{sec:direct_reconstruction_illustration}. The second expression for the approximate free energy is obtained by perturbing the two peaks of the exact free energy~\eqref{eq:freeenergy} by a distance $0.1$ of radians. Finally, $\bar{A}^3$ is obtained from the exact free energy expression by adding the cosine function to the exact free energy $\bar{A}^1$, resulting in a large perturbation on the amplitude of the associated macroscopic invariant distribution. The effect of the cosine perturbation is that the hight of the left peak in the macroscopic distribution of $\theta$ is decreased, while the height of the right peak is increased. The three Gibbs distributions associated to these options for the (approximate) free energy with $\beta=1$ are shown in Figure~\ref{fig:freeenergy}.

\begin{figure}
	\centering
	\includegraphics[width=0.6\linewidth]{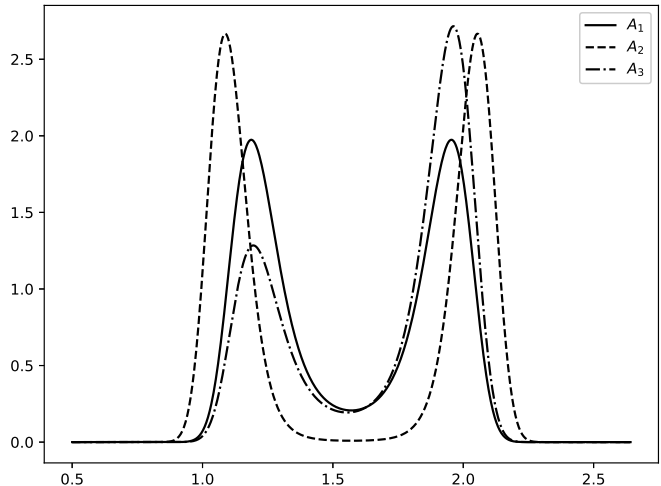}
	\caption{The three approximate free energy distributions as defined in equation~\eqref{eq:freeenergies}. The blue curve is the exact free energy of the three-atom molecule~\eqref{eq:trheeatompotential}, the red curve is obtained by shifting the local maxima of the exact distribution by $0.1$ by the left and the right, while the final distribution is the exact expression perturbed by the cosine function.}
	\label{fig:freeenergy}
\end{figure}

Similarly, the two choices for the macroscopic proposal distribution $q_0$ are based on the following two stochastic dynamical systems for the reaction coordinate,
\begin{align} \label{eq:coarseproposalmoves}
q_0^1  &:d\theta = -\nabla A_i(\theta)dt + \sqrt{2\beta^{-1}} dW \nonumber \\ 
q_0^2 &: d\theta = \sqrt{2\beta^{-1}} dW.
\end{align}
The first stochastic differential equation is the overdamped Langevin dynamics for each of the approximate free energy functions $\bar{A}^i, i=1,2,3$ (MALA), while the latter equation is a simple Brownian motion in the reaction coordinate space.

In the following experiment, we run the mM-MCMC algorithm with each of these six combinations for the approximate macroscopic invariant distribution $\bar{\mu}_0 \propto \exp\left(-\beta\bar{A}\right)$ and the macroscopic transition distribution $q_0$ for $N=10^6$ sampling steps and with a macroscopic time step $\Delta t = 0.01$. For each of these combinations, we compute the total macroscopic acceptance rate, the microscopic acceptance rate after reconstruction, the average runtime, the numerical variance on the estimated mean of $\theta$ and the total efficiency gain of of mM-MCMC over the MALA algorithm, as explained in Section~\ref{subsec:effcriterion}. The microscopic time step for the MALA method is $\delta t = \varepsilon$ and for we choose $\beta=1$ for the inverse temperature. For a statistically good comparison, we average the results over $100$ independent runs. The numerical efficiency gains are depicted in Tables~\ref{tab:threeatomgain1e-04} and~\ref{tab:threeatomgain1e-06} for $\varepsilon=10^{-4}$ and $\varepsilon=10^{-6}$ respectively.

\begin{table}[h]
\centering
\begin{tabular}{c|c|c|c|c|c}
\centering
Parameters & \pbox{15cm}{Macroscopic \\ acceptance rate} & \pbox{15cm}{Microscopic \\ acceptance rate} & \pbox{15cm}{Runtime \\ gain} & \pbox{15cm}{Variance \\ gain} & Total efficiency gain \\
\hline
Langevin, $\bar{A}^1$ & 0.749932            &           1            &      2.45692     &      85.3266       &             209.64 \\
Langevin, $\bar{A}^2$ &                      0.730384              &         0.432508      &     2.62306  &        28.4122          &           74.527 \\
Langevin, $\bar{A}^3$    &                   0.749653         &              0.950238     &      1.95663       &    99.7186            &       195.112 \\
Brownian, $\bar{A}^1$    &                   0.645188           &            1                 & 3.03108        &      85.1586               &          258.122 \\
Brownian, $\bar{A}^2$     &                 0.61375             &           0.597058       &    3.28045 &         35.6794         &             117.044 \\
Brownian, $\bar{A}^3$        &               0.645654            &           0.959794       &    2.72728     &       81.3405        &           221.838 \\
\end{tabular}
\caption{A summary of different statistics of the mM-MCMC method with $\varepsilon=10^{-4}$ for six combinations of the (approximate) macroscopic invariant distribution and macroscopic proposal moves. The `Macroscopic acceptance rate' column is the number of accepted reaction coordinate values relative to the number of microscopic samples. Second, the `Microscopic acceptance rate' counts the number of microscopic samples that are accepted after reconstruction, relative to all macroscopic accepted reaction coordinate values. The `Runtime gain' column is given by the average mM-MCMC runtime divided by the average Metropolis-Hastings runtime. while the `Variance gain' column summarizes the gain in variance of the computed means of $\theta$ with mM-MCMC over the variance of the computed means of $\theta$ with the MALA method. The final column combines the previous columns using the efficiency gain criterion~\eqref{eq:effgain}.}
\label{tab:threeatomgain1e-04}
\end{table}

\begin{table}[h]
\centering
\begin{tabular}{c|c|c|c|c|c}
	\centering
	Parameters & \pbox{15cm}{Macroscopic \\ acceptance rate} & \pbox{15cm}{Microscopic \\ acceptance rate} & \pbox{15cm}{Runtime \\ gain} & \pbox{15cm}{Variance \\ gain} & Total efficiency gain \\
	\hline
	Langevin, $\bar{A}^1$         &              0.749906        &               1           &        2.50343       &     3297.65           &       8255.44 \\
	Langevin, $\bar{A}^2$           &             0.730538              &         0.43242    &        2.64621    &    933.64    &      2470.61 \\
Langevin, $\bar{A}^3$           &           0.749596               &        0.950308    &       1.99301     &  2461.08   &           4904.96 \\
Brownian, $\bar{A}^1$            &          0.645272    &                   1                &  3.058        & 3223.1             &     9856.26 \\
Brownian, $\bar{A}^2$                &        0.613716      &                 0.59717       &     3.31087    &    1274.01        &         4218.07  \\
Brownian, $\bar{A}^3$              &      0.645702        &               0.959848       &    2.77357  &   3229.06        &        8956.03 \\
\end{tabular}
\caption{A summary of different statistics of the mM-MCMC method with $\varepsilon=10^{-6}$ for six combinations of the (approximate) macroscopic invariant distribution and macroscopic proposal moves. The different columns are the same as in Table~\ref{tab:threeatomgain1e-04}.}
\label{tab:threeatomgain1e-06}
\end{table}

\paragraph{\textbf{Numerical results}}
The numerical results in Tables~\ref{tab:threeatomgain1e-04} and~\ref{tab:threeatomgain1e-06} indicate a large efficiency gain of mM-MCMC over the microscopic MALA algorithm. The efficiency gain is on the order of the time-scale separation $208/\varepsilon$. For instance, a gain of a factor $8255$ in Table~\ref{tab:threeatomgain1e-06} indicates that the mM-MCMC method needs $8255$ times fewer sampling steps to obtain the same variance on the estimated mean of $\theta$ than the microscopic MALA method, for the same runtime. 

First, note that for both values of $\varepsilon$, the macroscopic acceptance rate is lower when using Brownian macroscopic proposals than when using Langevin dynamics proposals. This result is intuitive since the Brownian motion does not take into account the underlying macroscopic probability distribution, while the Langevin dynamics will automatically choose reaction coordinate values in regions of higher macroscopic probability. However, with a lower macroscopic acceptance rate comes a lower runtime as well since we need to reconstruct fewer microscopic samples and hence fewer evaluations of the microscopic potential energy. This effect is indeed visible in the fourth column where the runtime gain is higher for Brownian motion than that of the corresponding Langevin dynamics. Further, one can see in both Tables that there is almost no difference in the gain on the variance of the estimated mean of $\theta$ between the two macroscopic proposal moves. Hence, the total efficiency gain of mM-MCMC is almost completely determined by the lower runtime due to the macroscopic Brownian proposals. Practically, however, we conclude that there is a small difference in efficiency gain between the macroscopic proposals based on Brownian motion or on the effective dynamics of $\theta$, as is visible in the last column of Tables~\ref{tab:threeatomgain1e-04} and~\ref{tab:threeatomgain1e-06}.

The choice of macroscopic invariant distribution $\bar{\mu}_0$, however, has a larger impact on the efficiency of mM-MCMC with direct reconstruction. First of all, one can see that the macroscopic acceptance rate is less affected by the choice of macroscopic invariant distribution than by the choice of macroscopic proposal move $q_0$. Indeed, the Langevin proposals are based on the free energy of their respective approximate macroscopic distribution. However, the microscopic acceptance rate is significantly affected by the approximate macroscopic distribution $\bar{\mu}_0$. Since the microscopic acceptance criterion is used to correct the microscopic samples from having the wrong macroscopic distribution, the more the approximate macroscopic distribution $\bar{\mu}_0$ deviates from the exact macroscopic distribution $\mu_0$, the lower the microscopic acceptance rate will be. Indeed, the microscopic acceptance rate for the approximate free energy $\bar{A}^2$ is much lower than that for $\bar{A}^1$ since the local minima of $\bar{A}^2$ are located at different positions. On the other hand, the approximate free energy $\bar{A}^3$ lies closer to $\bar{A}^1$ since only the height of both peaks different, resulting in a microscopic acceptance rate close to $1$.

Consequently, the closer $\bar{\mu}_0$ lies to the exact invariant distribution $\mu_0$ of the reaction coordinates, the higher the gain in variance is over the microscopic MALA method. Indeed, if the microscopic acceptance rate is low, we store the same microscopic sample many times, prohibiting a thorough exploration of the microscopic state space and thus keeping the variance obtained by mM-MCMC high.

 We thus conclude this experiment by stating that the choice of approximate macroscopic distribution $\bar{\mu}_0$ has a larger impact on the efficiency gain of mM-MCMC than the choice of macroscopic transition distribution $q_0$.

\subsubsection{Impact of $\bar{A}$ and $\bar{\nu}$ on the efficiency gain} \label{subsubsec:Anu}
\paragraph{\textbf{Experimental setup}}
For the third experiment on the three-atom molecule, we investigate the effect of the choice of reconstruction distribution $\bar{\nu}$ on the efficiency gain of mM-MCMC over MALA, in conjunction with the same three choices for the approximate free energy $\bar{A}$~\eqref{eq:freeenergies}. For consistency of the numerical results, we employ macroscopic proposal moves based on the effective dynamics of $\theta$~\eqref{eq:effdyntheta} with the given approximate free energy function. The two reconstruction distributions that we consider in this numerical experiment are
\begin{align} \label{eq:reconstructions}
\bar{\nu}^1(x|\theta) &= \nu(x|\theta) \nonumber \nonumber \\
\bar{\nu}^2(x|\theta) &\propto \exp\left(-\frac{(x_a-1)^2}{4\varepsilon}\right) \exp\left(-\frac{(r_c-1)^2}{4\varepsilon}\right).
\end{align}
The first reconstruction distribution is the exact time-invariant distribution as defined in~\eqref{eq:exactreconstruction2}, while the second distribution is obtained by increasing the variance on $x_a$ and $r_c$ by a factor of $2$, relative to $\bar{\nu}^1$.

\begin{table}[h]
	\centering
	\begin{tabular}{c|c|c|c|c|c}
		\centering
		Parameters & \pbox{15cm}{Macroscopic \\ acceptance rate} & \pbox{15cm}{Microscopic \\ acceptance rate} & \pbox{15cm}{Runtime \\ gain} & \pbox{15cm}{Variance \\ gain} & Total efficiency gain \\
		\hline
		$\bar{A}^1, \bar{\nu}^1$ &                            0.749932        &               1        &           2.45692         &    85.3266         &        209.64 \\
		$\bar{A}^2, \bar{\nu}^1$                &            0.730384     &                  0.432508   &         2.62306           &  28.4122             &     74.527 \\
		$\bar{A}^3, \bar{\nu}^1$        &                    0.749653    &                   0.950238      &      1.95663            & 99.7186        &         195.112 \\
		$\bar{A}^1, \bar{\nu}^2$ &                           0.749891       &                0.476963       &     2.45479            & 37.5736         &         92.2354 \\
		$\bar{A}^2, \bar{\nu}^2$ &                           0.730482                   &    0.266464        &    2.62922           &  17.2829              &    45.4405 \\
		$\bar{A}^3, \bar{\nu}^2$     &                       0.749654               &        0.474436       &     1.92062            & 50.257          &         96.5246 \\
	\end{tabular}
	\caption{A summary of different statistics of the mM-MCMC method with $\varepsilon=10^{-4}$ for six combinations of the (approximate) macroscopic invariant distribution and reconstruction distribution. For each of these six combinations, we record the average acceptance rate at the macroscopic level, the average acceptance rate on the microscopic level, conditioned on all accepted macroscopic samples and the gain in runtime and variance on the estimated mean of $\theta$ of mM-MCMC over the microscopic MALA method. The final column record the total efficiency gain of mM-MCMC, which is the product of the two former columns.}
	\label{tab:threeatomgainnu}
\end{table}

\begin{table}[h]
	\centering
	\begin{tabular}{c|c|c|c|c|c}
		\centering
		Parameters & \pbox{15cm}{Macroscopic \\ acceptance rate} & \pbox{15cm}{Microscopic \\ acceptance rate} & \pbox{15cm}{Runtime \\ gain} & \pbox{15cm}{Variance \\ gain} & Total efficiency gain \\
		\hline
		$\bar{A}^1, \bar{\nu}^1$            &                0.749906          &             1             &      2.50343           & 3297.65          &         8255.44 \\
		$\bar{A}^2, \bar{\nu}^1$                 &           0.730538             &          0.43242    &         2.64621            & 933.64         &          2470.61\\
		$\bar{A}^3, \bar{\nu}^1$  &                          0.749596          &             0.950308        &    1.99301           & 2461.08             &      4904.96 \\
		$\bar{A}^1, \bar{\nu}^2$              &              0.750029            &           0.47711    &         2.58486       &     1250.1  &                  3231.34 \\
		$\bar{A}^2, \bar{\nu}^2$        &                    0.730455         &              0.266443       &     2.65114         &    547.826           &       1452.37 \\
		$\bar{A}^3, \bar{\nu}^2$  &                          0.749564   &                    0.474496    &        1.93858     &       1396.05     &              2706.35 \\
	\end{tabular}
	\caption{A summary of different statistics of the mM-MCMC method with $\varepsilon=10^{-6}$ for six combinations of the (approximate) macroscopic invariant distribution and reconstruction distribution. The columns are the same as in Table~\ref{tab:threeatomgainnu}.}
	\label{tab:threeatomgainnu6}
\end{table}

In this experiment, we again compute the macroscopic acceptance rate, the microscopic acceptance rate, the gain in runtime for a fixed number of sampling steps, the gain in variance of the estimated mean of $\theta$ and the total efficiency gain of mM-MCMC over the microscopic MALA algorithm for each of the six combinations of the approximate macroscopic distribution $\bar{\mu}_0$ and reconstruction distribution $\bar{\nu}$. We perform each experiment with $N=10^6$ steps, the temperature parameter is $\beta=1$, the macroscopic time step is again $\Delta t=0.01$. We use a time step $\delta t = \varepsilon$ for the MALA algorithm. For a good comparison, the numerical results are averaged over $100$ independent runs. The numerical results are shown in Table~\ref{tab:threeatomgainnu} for $\varepsilon=10^{-4}$ and in Table~\ref{tab:threeatomgainnu6} for $\varepsilon=10^{-6}$.

\paragraph{\textbf{Numerical results}}
As intuitively expected, the choice of reconstruction distribution has a negligible impact on the macroscopic acceptance rate, but it does have a significant effect on the microscopic acceptance rate. For both values of $\varepsilon$ and for the three choices of $\bar{A}$, reconstruction distribution $\bar{\nu}^2$ has a lower microscopic acceptance rate than $\bar{\nu}^1$. Since the distribution $\bar{\nu}^2$ is not the correct reconstruction distribution~\eqref{eq:conditionalxgivenz}, the microscopic acceptance criterion needs to correct for this wrong reconstruction, lowering the average microscopic acceptance rate. As we also noted in the previous experiment, a lower microscopic acceptance rate keeps the variance on the estimated mean of $\theta$ high and hence the total efficiency gain of mM-MCMC over the microscopic MALA algorithm is small.

To conclude, the choice of approximate macroscopic invariant distribution $\bar{\mu}_0$ and the choice of reconstruction distribution $\bar{\nu}$ have a significant impact on the total efficiency gain of mM-MCMC. The closer $\bar{\mu}_0$ and $\bar{\nu}$ lie to $\mu_0$ and $\nu$, respectively, the higher the total efficiency gain will be. In practice, however, it can be hard to find a good expression for the approximate free energy and the reconstruction distribution. Even if we have a good approximate reconstruction distribution available, it can be computationally expensive and cumbersome to sample from this reconstruction distribution since the sub-manifold of constant reaction coordinate value $z$, $\Sigma(z)$, may have a highly non-linear form. We will therefore explore a general \emph{indirect} reconstruction step that samples, in a general manner, from a reconstruction distribution that lies close the exact reconstruction distribution $\nu$ in a companion paper~\cite{vandecasteele2020indirect}.

\subsubsection{Efficiency gain as a function of $\varepsilon$} \label{subsubsec:effgaineps}
\paragraph{\textbf{Experimental setup}}
In the fourth and final experiment on the three-atom molecule, we estimate the efficiency gain of mM-MCMC on the estimated mean and variance of the angle $\theta$, as a function of the time-scale separation $\varepsilon$. We consider two different choices for the approximate macroscopic distribution $\bar{\mu}_0$ and the reconstruction distribution $\bar{\nu}$. For the first choice, we take the exact free energy $\bar{A}^1$~\eqref{eq:freeenergies} and the exact time-invariant reconstruction distribution $\bar{\nu}^1$~\eqref{eq:reconstructions} such that the exact reconstruction property holds. The other choice consists of approximate free energy $\bar{A}^2$ and reconstruction distribution $\bar{\nu}^2$. For both choices, the macroscopic proposal moves are based on the overdamped Langevin dynamics with the corresponding (approximate) free energy function and time step $\Delta t = 0.01$. We measure the efficiency gain for four values of the time-scale separation, $\varepsilon = 10^{-i}, \ i=3,\dots,6$ and with $N=10^6$ sampling steps. The microscopic time step for the microscopic MALA algorithm is $\delta t = \varepsilon$, and the inverse temperature is $\beta=1.$. The numerical results are shown in Figure~\ref{fig:effgaindirect}.

\begin{figure}
	\centering
	\includegraphics[width=0.7\linewidth]{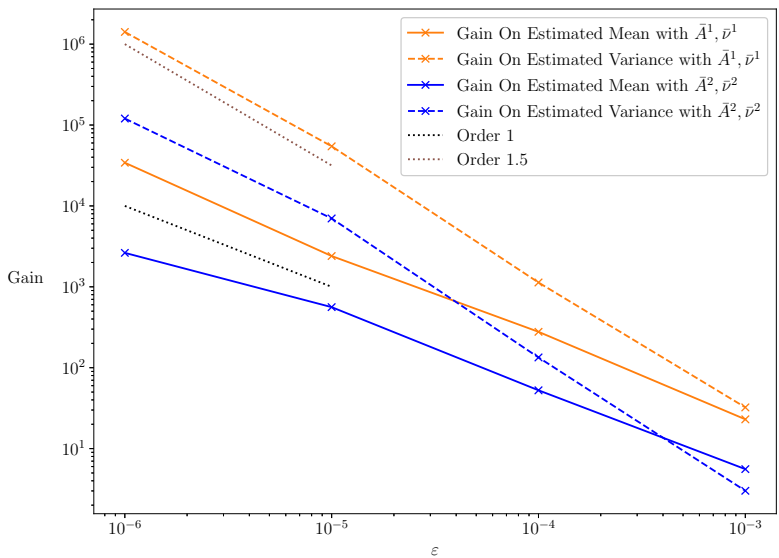}
	\caption{Efficiency gain of mM-MCMC over the standard MCMC method on the estimated mean (solid line) and variance (dashed line) of the angle $\theta$ for two different parameter choices: $(\bar{A}^1, \bar{\nu}^1)$ (orange lines) and $(\bar{A}^2, \bar{\nu}^2)$ (blue lines). The gain is computed using the criterion~\eqref{eq:effgain}, for $\varepsilon=10^{-i},  \ i=3,\dots,6$ and with $N=10^6$ microscopic samples. For both parameter choices, the efficiency gain on the estimated mean increases linearly with decreasing $\varepsilon$, and the gain on the estimated variance increases faster, approximately $\varepsilon^{-1.5}$. However, the efficiency gain on the estimated mean and variance of $\theta$ increases slower when the exact reconstruction property does not hold, i.e., $\bar{A}^2, \bar{\nu}^2$, than when it does, i.e., $\bar{A}^1, \bar{\nu}^1$.}
	\label{fig:effgaindirect}
\end{figure}

\paragraph{\textbf{Numerical results}}
For both choices of the parameters in the mM-MCMC method, the efficiency gain on the estimated mean and variance increases linearly or faster with decreasing $\varepsilon$, proving the mM-MCMC can accelerate the sampling of systems with a medium to large time-scale separation. Additionally, in case the exact reconstruction property~\eqref{eq:exactreconstruction5} holds with $(\bar{A}^1, \bar{\nu}^1)$, the efficiency gain is higher than when the exact reconstruction property does not hold, i.e., in case of $(\bar{A}^2, \bar{\nu}^2)$. This numerical result is an illustration of Theorem~\ref{thm:ergodicity_mM-MCMC}, which states that the rate of convergence of mM-MCMC is identical to the rate of convergence of the macroscopic sampler when the exact reconstruction property holds. When the latter assumption does not hold, the rate of convergence can be lower than the macroscopic rate. Currently, we have no way of deriving how the efficiency gain depends on the time-scale separation and we can only observe the numerical results in this manuscript. A good point to start such an analysis would be study how the variance on the estimated mean of $\theta$ of the microscopic MALA method depends on the time-scale separation. We defer such an analysis to further research.

\subsection{Butane} \label{subsec:butane}
\paragraph{\textbf{Model problem}}
For the second numerical illustration, we consider the butane molecule, depicted on Figure~\ref{fig:butane}~\cite{schappals2017round,zuckerman2002transition}. The objective of the numerical experiments in this section is not to add to the body of knowledge of butane, but rather to test the mM-MCMC method with direct reconstruction on a higher dimensional problem. For simplicity, we only simulate the carbon backbone of the molecule (the grey atoms) and remove the hydrogen atoms (white). Such lower-dimensional model is also called a `united atom' description~\cite{schappals2017round} and this model keeps the main features and difficulties of the molecule.

\begin{figure}
	\centering
	\includegraphics[width=0.5\linewidth]{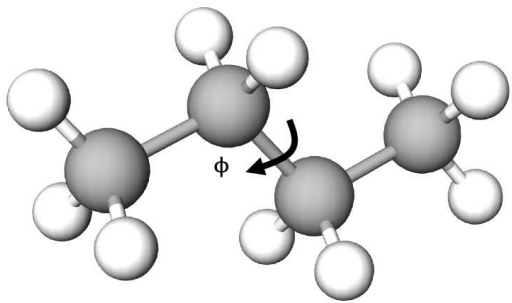}
	\caption{The butane molecule. The carbon atoms are grey and hydrogen is white.}
	\label{fig:butane}
\end{figure}

The extra term in the potential of butane, compared to the three-atom molecule, is the torsion angle $\phi$ between the two central carbon atoms, as illustrated  in Figure~\ref{fig:butane}. The torsion angle determines how close the two outer $CH_3-$groups are to each other, and hence it determines the different global conformations the molecule can take. The most stable conformation is obtained when the two outer groups are as far away from each other as possible, as depicted on Figure~\ref{fig:butane}. This situation coincides with $\phi = 0$.

The potential energy of butane consists of a quadratic term for each of the three the carbon-carbon bonds and another quadratic term for each of the two $C-C-C$ angles. Finally, the potential energy term for the torsion angle $\phi$ has three local minima, each of these represents a stable conformation of the molecule. All the terms in the potential energy with the corresponding parameter values are summarised in Table~\ref{tab:butane}. These values are obtained from~\cite{schappals2017round}. Note that we do not include volume-exclusion forces into the molecule for simplicity and for a more thorough understanding of the numerical results of mM-MCMC.

\begin{table}
	\centering
	\begin{tabular}{c|c|c}
		Term & form & parameters \\
		\hline
		C-C Bond & $0.5 \ k_b \ (r - r_0)^2$ & $k_b = 1.17 \cdot 10^{6}$, $r_0 = 1.53$ \\
		C-C-C Angle & $0.5 \ k_a \ (\theta - \theta_0)^2$ & $k_a = 62500, \ \theta_0 = 112 \deg$ \\
		Torsion Angle& {$\! \begin{aligned} &c_0 + c_1\cos(\phi) + \\ &c_2 \cos(\phi)^2 + c_3 \cos(\phi)^3 \end{aligned} $}& {$\! \begin{aligned} &c_0=1031.36, \ c_1 = 2037.82, \\ &c_2 = 158.52, \ c_3 = -3227.7\end{aligned}$} \\
	\end{tabular}
	\caption{Terms with parameters in the potential energy of butane.}
	\label{tab:butane}
\end{table}

\paragraph{\textbf{Experimental setup}}
Based on the strength of the $C-C$ bond ($k_b=1.17 \ 10^6$) and the largest largest parameter in the torsion potential ($c_3=-3227.7$), the time-scale separation is approximately a factor of $350$. We therefore choose the torsion angle $\phi$ as reaction coordinate, i.e.,
\[
\xi(x) = \phi.
\]
The free energy of this reaction coordinate is readily visible from the potential energy function of butane, i.e., 
\[
A(\phi) = c_0 + c_1\cos(\phi) + c_2 \cos(\phi)^2 + c_3 \cos(\phi)^3,
\]
because this term is independent of the other potential energy terms that determine the vibrations of each of the three bond lengths and each of the two angles.

In the following experiment, we inspect the efficiency gain of mM-MCMC over microscopic MALA algorithm with temperature parameter $\beta = 10^{-2}$. For the microscopic MALA method, we employ a time step of $\delta t = 10^{-6} \sim k_b^{-1}$ and the time step for mM-MCMC is  $\Delta t = 5 \cdot 10^{-4}$ to keep the macroscopic acceptance rate close to $0.3$. This acceptance rate allows for a good exploration of all possible conformations of the butane molecule. We start both Markov chains in the most stable conformation, i.e., $\phi=0$. On the macroscopic level, we take the exact macroscopic invariant distribution and we base the macroscopic proposals on the Euler-Maruyama discretization of the overdamped Langevin process with potential energy $A(\phi)$. We also take the exact time-invariant reconstruction distribution for the reconstruction step , i.e.,
\begin{equation*}
\begin{aligned}
\bar{\mu}_0(\phi) &= \mu_0(\phi) \propto \exp\left(-\beta A(\phi)\right) \\
q_0 &: d\phi = -\nabla A(\phi) dt + \sqrt{2 \beta^{-1}} dW \\
\bar{\nu}(x|\phi) &= \nu(x|\phi) = \frac{\mu(x)}{\mu_0(\phi)} \ \delta(\xi(x)-\phi).
\end{aligned}
\end{equation*}

Note that the above Langevin dynamics is not the same as the effective dynamics~\eqref{eq:effdyn} of the torsion angle $\phi$ since the diffusion term $\sigma(\phi)$ is not constant for butane. With these choices of approximate macroscopic distribution $\bar{\mu}_0$ and the reconstruction distribution $\bar{\nu}$, the exact reconstruction condition~\eqref{eq:exactreconstruction2} is satisfied.

\begin{figure}
	\centering
	\begin{subfigure}[b]{0.5\textwidth}
		\centering
		\includegraphics[width=\linewidth]{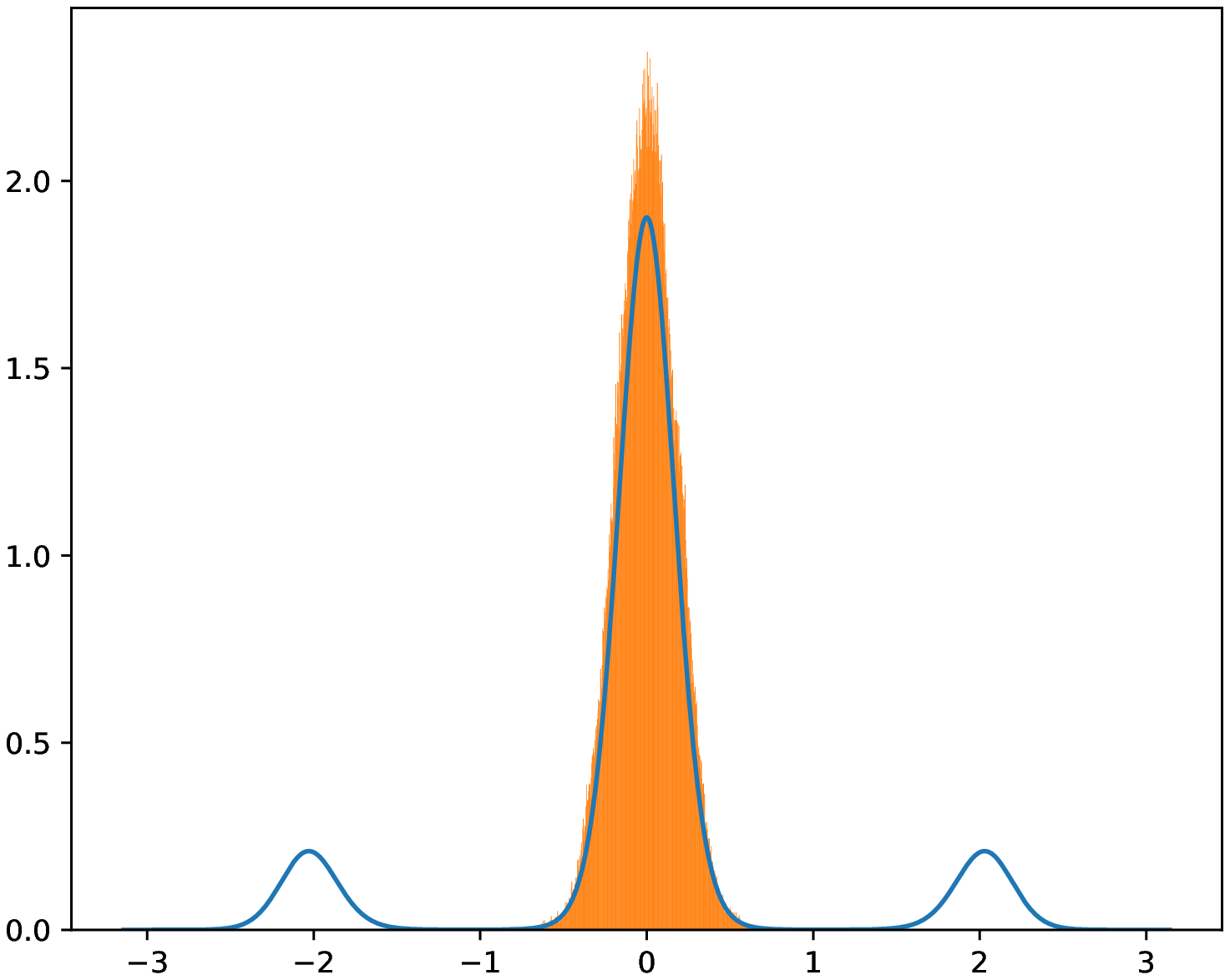}
	\end{subfigure}%
	\begin{subfigure}[b]{0.5\textwidth}
		\centering
		\includegraphics[width=\linewidth]{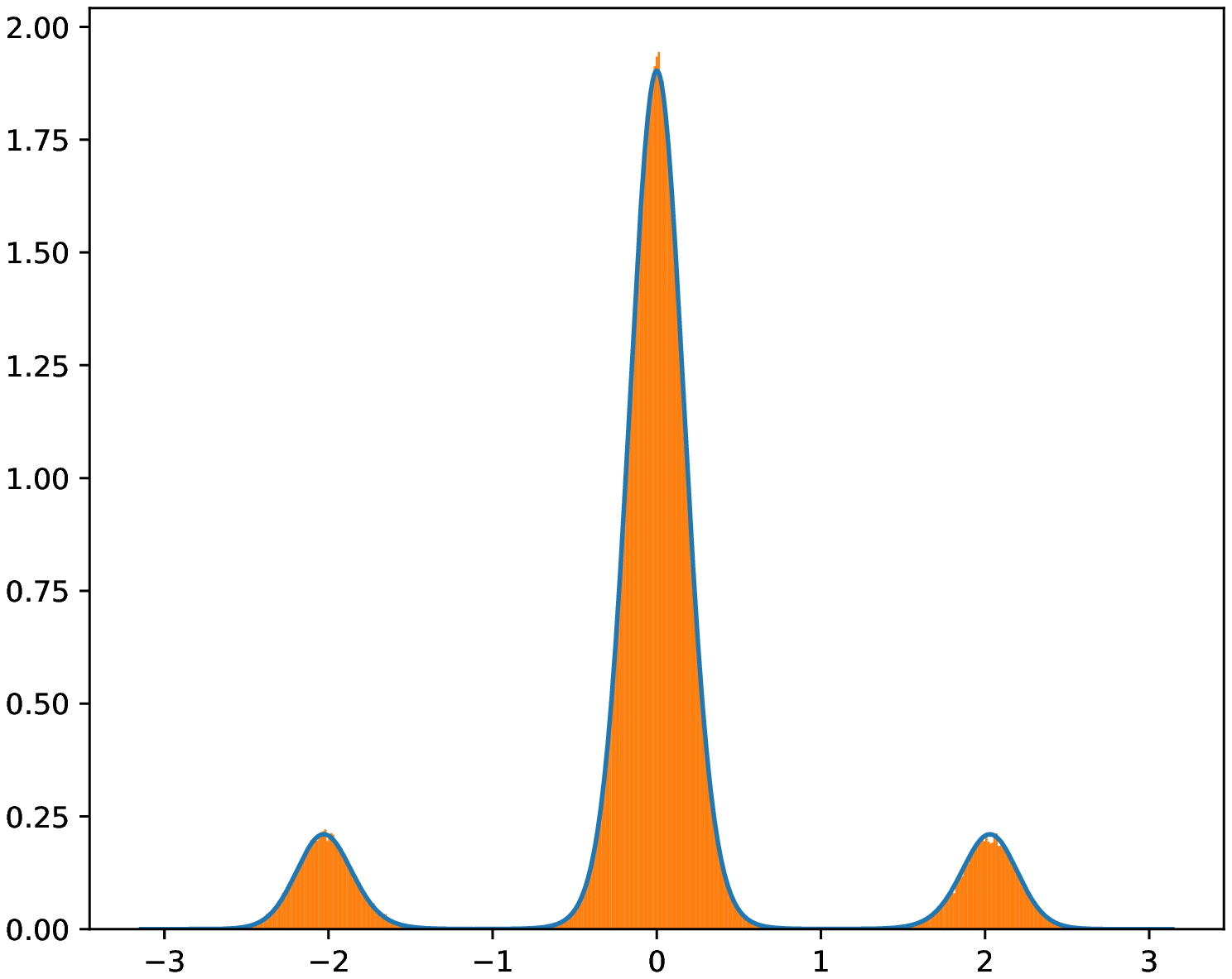}
	\end{subfigure}
    \caption{Numerical sampling result of the invariant distribution of the torsion angle $\phi$ of butane for the Metropolis-Hastings method (left) and mM-MCMC with direct reconstruction (right). Clearly, the Metropolis-Hastings scheme is not able to sample the two outer lobes of the distribution of $\phi$ accurately. No single proposal lies in one of these two lobes. On the other hand, mM-MCMC is able to sample in those two outer lobes and the whole distribution is sampled accurately.}
    \label{fig:butane_mM_micro}
\end{figure}

\paragraph{\textbf{Numerical results}}
On Figure~\ref{fig:butane_mM_micro}, we plot the histogram of $\phi$ for the microscopic MALA method (left) and mM-MCMC (right). The MALA algorithm with small time steps $\delta t$ is not able to sample the two outer lobes of the probability distribution of $\phi$ accurately due to the time-scale separation. However, the mM-MCMC scheme is able to sample each of the three lobes in the distribution of $\phi$ accurately. By taking larger time steps $\Delta t$ at the macroscopic level, mM-MCMC is able to cross the potential energy barrier frequently enough to obtain a good sampling. 

To measure the efficiency of mM-MCMC over the microscopic MALA scheme, we numerically estimate the mean of $\phi$ with the obtained microscopic samples. For a good comparison, we average the estimated means of $\phi$ over $100$ independent runs. We also keep track of the macroscopic and microscopic acceptance rates of the mM-MCMC method, the runtime of both methods for a given number of sampling steps, the variance on the estimated mean of $\phi$ for both method and the total efficiency gain of mM-MCMC over MALA. These numerical quantities are displayed in Table~\ref{tab:gain_butane}. 

\begin{table}[h]
	\centering
	\begin{tabular}{c|c|c|c|c|c}
		Method & \pbox{15cm}{Macroscopic \\ acceptance rate}  & \pbox{15cm}{Microscopic \\ acceptance rate} & Runtime & Variance  & \pbox{15cm}{Total efficiency \\ gain} \\
		\hline
		MALA &  / & $0.7365$ & $5503$ seconds  & $0.0817$  & \multirow{2}{*}{303.95}\\
		mM-MCMC & $0.28919$ & $1$ & $279$ seconds & $0.00564$  & \\
	\end{tabular}
	\caption{Experimental results of the microscopic MCMC and mM-MCMC method for butane.}
	\label{tab:gain_butane}
\end{table}

First of all, note that the microscopic acceptance rate of mM-MCMC is indeed $1$, since the exact reconstruction property~\eqref{eq:exactreconstruction5} is satisfied. Furthermore, runtime for $N=10^6$ sampling steps is lower for mM-MCMC with direct reconstruction than for the MALA method method since fewer evaluations of the microscopic potential energy are required for mM-MCMC, as we noted before. The variance on the estimated mean of $\phi$ is also lower when using the mM-MCMC method than when using the microscopic MALA scheme, because the state space of the slow reaction coordinate $\phi$ is more thoroughly explored by taking larger time steps $\Delta t$.

Summing up, the efficiency gain of mM-MCMC over the microscopic MALA method is approximately a factor of $303$, bridging a large part of the time-scale separation of the butane molecule. This result indicates that we needed $303$ times fewer sampling steps than the microscopic MALA method to obtain the same variance on the estimated mean of $\phi$ for the same runtime. Said differently, we obtain a variance that is more than two order of magnitudes lower for the same computational cost.

\section{Conclusion and outlook} \label{sec:conclusion}
We introduced a new micro-macro Markov chain Monte Carlo method (mM-MCMC) to sample invariant distributions in molecular dynamics systems for which the associated Langevin dynamics exhibits a time-scale separation between the microscopic (fast) dynamics, and the macroscopic (slow) dynamics of some low-dimensional set of reaction coordinates. Instead of a direct MCMC sampling scheme at the microscopic level, the mM-MCMC algorithm first samples a value of a given reaction coordinate, and then reconstructs a microscopic sample from this reaction coordinate value. In principle, any coarse-graining scheme can be used with the mM-MCMC method, but in this manuscript, we focussed on reaction coordinates since reaction coordinates are used often in practice. The mM-MCMC scheme is most useful when there are accurate approximations available to the macroscopic invariant distribution of the reaction coordinates and to the reconstruction distribution. This way, the microscopic acceptance rate is maximized and as little redundant evaluations of the high-dimensional microscopic potential energy function are performed. The efficiency gain of the mM-MCMC scheme is then maximized. We showed numerically on two molecular systems, an academic three-atom molecule and an important test molecule butane, that the mM-MCMC algorithm obtains a large gain in efficiency, compared to the (microscopic) MALA method. For these molecular systems, the efficiency gain is on the order of the time-scale separation.

We envision three main directions for future research. First, in many high-dimensional, practical examples it can be difficult to sample a microscopic sample from the sub-manifold of constant reaction coordinate value. We therefore developed a general, indirect, reconstruction scheme in a companion paper~\cite{vandecasteele2020indirect} that is able to reconstruct microscopic samples that lie close to an arbitrary sub-manifold. This indirect reconstruction scheme will greatly extend the applicability of the mM-MCMC algorithm. Second, as we mentioned in the manuscript, the mM-MCMC method can also be used with discrete coarse-graining methods, such as kinetic Monte Carlo. An extension of mM-MCMC with kinetic Monte Carlo would be important in practice since the latter method is used often. Finally, instead of a two-level MCMC method, more levels can be added if there are more than two different time-scales present in the problem, creating a multilevel mM-MCMC algorithm. From an algorithmic point of view, we do not expect the multilevel method to pose any extra difficulties, but such a method could have some important applications. In this context, it is also beneficial to investigate the maximal efficiency gain possible for a given functional of interest.

\bibliographystyle{plain}
\bibliography{refs}

\appendix

\section{Relation between $\mathcal{K}_{\text{mM}}$ and $\mathcal{D}$ with exact reconstruction} \label{app:relationKandD}
Here, we prove statement~\eqref{eq:mMkernelrewritten} in the proof of Theorem~\ref{thm:ergodicity_mM-MCMC}.

We prove by induction that for any $n \in \mathbb{N}$ we have
\begin{equation} \label{eq:relationKandD}
\mathcal{K}_{mM}^n(x'| \ x) = \nu(x'| \xi(x')) \ \mathcal{D}^n(\xi(x')| \xi(x)) + \mathcal{C}(\xi(x))^n \left(\delta(x'-x) - \nu(x'|\xi(x')) \delta\left(\xi(x')-\xi(x)\right)\right), 
\end{equation}
with 
\[
\mathcal{C}(\xi(x)) = 1-\int_H \alpha_{CG}(y| \xi(x)) \ q_0(y| \xi(x)) dy.
\]

For $n = 1$, we can simply rewrite expression~\eqref{eq:mMtransition kernel} using that $\alpha_F = 1$ by the exact reconstruction property.

Assume that $n > 1$ and that statement~\eqref{eq:relationKandD} holds for $n-1$. Writing out the $n-$th composition and using the induction hypothesis yields
\begin{equation*}
\begin{aligned}
\mathcal{K}_{mM}^n(x'|x) &= \int_{\mathbb{R}^d} \mathcal{K}_{mM}(x'|y) \ \mathcal{K}_{mM}^{n-1}(y|x) \ dy \\ &= \int_{\mathbb{R}^d} \Big(\nu(x'|\xi(x')) \ \mathcal{D}(\xi(x')|\xi(y)) + \mathcal{C}(\xi(y)) \ (\delta(x'-y) - \nu(x'|\xi(x')) \ \delta(\xi(x')-\xi(y)) \Big) \\ & \Big(\nu(y|\xi(y)) \mathcal{D}^{n-1}(\xi(y)|\xi(x)) + \mathcal{C}(\xi(x))^{n-1}(\delta(y-x) - \nu(y|\xi(y)) \ \delta(\xi(y)-\xi(x))) \Big)dy \\
\end{aligned}
\end{equation*}
Splitting this integral formulation into different terms, we obtain
\begin{equation*}
\begin{aligned}
\mathcal{K}_{mM}^n(x'|x) &= \nu(x'|\xi(x')) \ \mathcal{D}^n(\xi(x')|\xi(x))  \\
&+ \int_{\mathbb{R}^d} \nu(x'|\xi(x')) \ \mathcal{D}(\xi(x')|\xi(y)) \ \mathcal{C}(\xi(x))^{n-1} \ \left(\delta(y-x) - \nu(y|\xi(y)) \ \delta(\xi(y)-\xi(x)) \right) \ dy \\
&+ \int_{\mathbb{R}^d} \nu(y|\xi(y)) \ \mathcal{D}^{n-1}(\xi(y)|\xi(x)) \ \mathcal{C}(\xi(y)) \ \left(\delta(x'-y) - \nu(x'|\xi(x')) \ \delta(\xi(x')-\xi(y))\right)dy \\
&+ \mathcal{C}(\xi(x))^{n-1} \int_{\mathbb{R}^d}  \mathcal{C}(\xi(y))  \left(\delta(x'-y) - \nu(x'|\xi(x')) \ \delta(\xi(x')-\xi(y))\right) \\
 &(\delta(y-x) - \nu(y|\xi(y)) \ \delta(\xi(y)-\xi(x))) \ dy.
\end{aligned}
\end{equation*}
The first term of this expression is already in the desired form by the co-area formula. Writing out the second term using the definition of the $\delta$-function, we obtain
\begin{equation*}
\begin{aligned}
&\int_{\mathbb{R}^d} \nu(x'|\xi(x')) \ \mathcal{D}(\xi(x')|\xi(y)) \ \mathcal{C}(\xi(x))^{n-1} \ \left(\delta(y-x) - \nu(y|\xi(y)) \ \delta(\xi(y)-\xi(x)) \right) \ dy \\
&= \nu(x'|\xi(x')) \  \mathcal{C}(\xi(x))^{n-1} \left( \int_{\mathbb{R}^d} \mathcal{D}(\xi(x')|\xi(y)) \ \delta(y-x)- \int_{\mathbb{R}^d}  \mathcal{D}(\xi(x')|\xi(y)) \  \nu(y|\xi(y)) \ \delta(\xi(y)-\xi(x)) \ dy \right) \\
&= \nu(x'|\xi(x'))  \ \mathcal{C}(\xi(x))^{n-1} \left( \mathcal{D}(\xi(x')|\xi(x))- \int_{H} \mathcal{D}(\xi(x')|z) \ \delta(z-\xi(x)) \int_{\Sigma(z)} \nu(y|z) \norm{\nabla \xi(y)}^{-1} d\sigma_z(y) dz  \right)\\
&= 0.
\end{aligned}
\end{equation*}
This expression cancels because the probability density $\nu(y|z) \ \norm{\nabla \xi(y)}^{-1}$ integrates to $1$ on $\Sigma(z)$ by the co-area formula. Similarly, the third term cancels as well and expanding the fourth term yields
\begin{equation}
\begin{aligned}
&   \mathcal{C}(\xi(x))^{n-1} \int_{\mathbb{R}^d} \mathcal{C}(\xi(y)) \ \delta(x'-y) \  \delta(y-x) \ dy  \\ 
&- \mathcal{C}(\xi(x))^{n-1} \int_{\mathbb{R}^d} \mathcal{C}(\xi(y)) \ \delta(x'-y) \  \nu(y|\xi(y)) \ \delta(\xi(y)-\xi(x))\  dy \\
&-\mathcal{C}(\xi(x))^{n-1} \int_{\mathbb{R}^d}  \mathcal{C}(\xi(y)) \ \nu(x'|\xi(x')) \ \delta(\xi(x')-\xi(y)) \ \delta(y-x) \ dy \\
&+ \mathcal{C}(\xi(x))^{n-1}  \int_{\mathbb{R}^d}  \mathcal{C}(\xi(y)) \  \nu(x'|\xi(x')) \ \delta(\xi(x')-\xi(y)) \ \nu(y|\xi(y)) \  \delta(\xi(y)-\xi(x)) \ dy \\
&= \mathcal{C}(\xi(x))^n \ \delta(x'-x) - \mathcal{C}(\xi(x))^n \ \nu(x'|\xi(x')) \ \delta(\xi(x')-\xi(x)) \\ &- \mathcal{C}(\xi(x))^n \ \nu(x'|\xi(x')) \  \delta(\xi(x')-\xi(x)) + \mathcal{C}(\xi(x))^n \ \nu(x'|\xi(x')) \ \delta(\xi(x')-\xi(x)) \\
&= \mathcal{C}(\xi(x))^n \left( \delta(x'-x) - \nu(x'|\xi(x')) \ \delta(\xi(x')-\xi(x)) \right).
\end{aligned}
\end{equation}
Putting these expressions together, we obtain~\eqref{eq:relationKandD} with iteration number $n$.

\end{document}